\newtheorem{theorem}{Theorem}[section]
\newtheorem{lemma}[theorem]{Lemma}
\theoremstyle{definition}
\newtheorem{algo}[]{Algorithm}
\theoremstyle{remark}
\numberwithin{equation}{section}
\newcommand{\bfa}{\mathbf{a}}
\newcommand{\bfb}{\mathbf{b}}
\newcommand{\bfp}{\mathbf{p}}
\newcommand{\bfq}{\mathbf{q}}
\newcommand{\bfs}{\mathbf{s}}
\newcommand{\bft}{\mathbf{t}}
\newcommand{\bfu}{\mathbf{u}}
\newcommand{\bfv}{\mathbf{v}}
\newcommand{\bfw}{\mathbf{w}}
\newcommand{\nfb}{non-finitely based}
\newcommand{\con}{\operatorname{\mathsf{con}}}
\newcommand{\occ}{\operatorname{\mathsf{occ}}}
\newcommand{\sylv}{\operatorname{\mathsf{sylv}}}
\newcommand{\baxt}{\operatorname{\mathsf{baxt}}}
\newcommand{\stal}{\operatorname{\mathsf{stal}}}
\newcommand{\taig}{\operatorname{\mathsf{taig}}}
\newcommand{\ev}{\operatorname{\mathsf{ev}}}
\newcommand{\M}{\operatorname{\mathsf{M}}}
\newcommand{\ip}{\operatorname{\mathsf{ip}}}
\newcommand{\fp}{\operatorname{\mathsf{fp}}}
\newcommand{\id}{\operatorname{\mathsf{id}}}
\newcommand{\mix}{\operatorname{\mathsf{mix}}}
\begin{document}

\title[Finite basis problems for plactic-like monoids]{Finite basis problems for stalactic, taiga, sylvester and  Baxter monoids}%
\thanks{} %

\author[B. B. Han]{Bin Bin Han }
\author[W. T. Zhang]{Wen Ting Zhang$^\star$}\thanks{$^\star$Corresponding author} %

\address{School of Mathematics and Statistics, Lanzhou University, Lanzhou, Gansu 730000, PR China; Key Laboratory of Applied Mathematics and Complex Systems, Lanzhou, Gansu 730000, PR China} %
\email{zhangwt@lzu.edu.cn}

\subjclass[2010]{05E99, 20M05}

\keywords{stalactic monoid; taiga monoid; sylvester monoid; Baxter monoid; finite basis problem; identity}

\thanks{This research was partially supported by the National Natural Science Foundation of China (nos.~11771191, 11371177) and
the Natural Science Foundation of Gansu Province (no. 20JR5RA275).}

\begin{abstract}
Stalactic, taiga, sylvester and Baxter monoids arise from the combinatorics of tableaux by identifying words over a fixed ordered alphabet whenever they produce the same tableau via some insertion algorithm. In this paper, three sufficient conditions under which semigroups are finitely based are given. By applying these sufficient conditions, it is shown that all stalactic and taiga monoids of rank greater than or equal to $2$ are finitely based and satisfy the same identities, that all sylvester monoids of rank greater than or equal to $2$ are finitely based and satisfy the same identities and that all Baxter monoids of rank greater than or equal to $2$ are finitely based and satisfy the same identities.
\end{abstract}

\maketitle

\section{Introduction}
Knuth introduced the tableaux algebra \cite{K71} in the 1970s and this algebra was later studied in detail by Lascoux and Sch\"{u}tzenberger under the name plactic monoid \cite{LS81}. Plactic monoid arise from the combinatorics of tableaux by identifying words over a fixed ordered alphabet whenever they produce the same tableau via Schensted's insertion algorithm \cite{Sch61}. Plactic-like monoids which arise from the combinatorics of tableaux as the plactic monoid include the hypoplactic monoid \cite{KT97,Nov00}, the stalactic monoid \cite{HNT07,Pri13}, the taiga monoid \cite{Pri13}, the sylvester monoid \cite{HNT05} and the Baxter monoid \cite{Gir12}. These monoids have attracted much attention due
to their interesting connection with combinatorics \cite{Lot02} and applications in symmetric functions \cite{Mac08}, representation theory \cite{Ful97}, Kostka-Foulkes polynomials \cite{LS78,LS81}, Schubert polynomials \cite{LS85, LS90}, and musical theory \cite{Jed11}.

Each of these plactic-like monoids can be obtained by factoring the free monoid $\mathcal{A}^*$ over the infinite ordered alphabet $\mathcal{A} = \{1 < 2 < 3 < \cdots\}$ by a congruence that can be defined by a so-called insertion
algorithm that computes a combinatorial object from a word. For example, for
the stalatic monoid, the corresponding combinatorial objects are stalactic tableaus.
We introduce the definitions of combinatorial objects and insertion algorithms used to construct the stalactic, taiga, sylvester and Baxter monoids.

A \textit{stalactic tableau} is a finite array of symbols of $\mathcal{A}$ in which columns are top-aligned, and two symbols appear in the same column if and only if they are equal.
The associated insertion algorithm is as follows:

\begin{algo}\label{algo:stal}\cite[\S~3.7]{HNT07}
Input: A stalactic tableau $T$ and a symbol $a \in \mathcal{A}$.
If $a$ does not appear in $T$, add $a$ to the left of the top row of $T$; if $a$ does appear in $T$, add $a$ to the bottom of the column in which $a$ appears. Output the new tableau.
\end{algo}

Let $w_1, \cdots, w_k\in \mathcal{A}$ and $w=w_1\cdots w_k \in \mathcal{A}^*$. Then the combinatorial object ${\rm P}_{\stal_{\infty}}(w)$ of $w$ is obtained as follows: reading
$w$ from right-to-left, one starts with an empty tableau and inserts each symbol in $w$ into a stalactic tableau according to  Algorithm \ref{algo:stal}.
For example, ${\rm P}_{\stal_{\infty}}(3613151265)$ is given as follows:
\begin{equation*}
\begin{tikzpicture}
\matrix [nodes=draw,column sep=1mm]
{
\node {3}; &[-1mm] \node{1}; &[-1mm] \node{2};&[-1mm] \node{6}; &[-1mm] \node{5}; \\
\node {3}; & \node{1}; &  & \node{6};  & \node{5};\\
 & \node{1}; & &  &\\
};
\end{tikzpicture}
\end{equation*}
Notice that the order in which the symbols appear along the first row in ${\rm P}_{\stal_{\infty}}(w)$ is the same as the order
of the rightmost instances of the symbols that appear in $w$.

A \textit{binary search tree with multiplicities} is a labelled binary search tree in which each
label appears at most once, where the label of each node is greater than the label of every
node in its left subtree, and less than the label of every node in its right subtree, and where a non-negative integer called the \textit{multiplicity} is assigned to each node label.
The associated insertion algorithm is as follows:
\begin{algo}\label{algo:taig}\cite[ Algorithm 3]{Pri13}
Input: A binary search tree with multiplicities $T$ and a symbol $a \in \mathcal{A}$.
If $T$ is empty, create a node, label it by $a$, and assign it multiplicity $1$. If $T$ is non-empty, examine the label $x$ of the root node: if $a < x$, recursively insert $a$ into the left subtree of the root node; if $a > x$, recursively insert $a$ into the right subtree of the root note; and if $a = x$, increment by $1$ the multiplicity of the node label $x$.
\end{algo}

Let $w_1, \cdots, w_k\in \mathcal{A}$ and $w=w_1\cdots w_k \in \mathcal{A}^*$. Then the combinatorial object ${\rm P}_{\taig_{\infty}}(w)$ of $w$ is obtained as follows: reading
$w$ from right-to-left, one starts with an empty tree and inserts each symbol in $w$ into a binary search tree with multiplicities according to  Algorithm \ref{algo:taig}.
For example, ${\rm P}_{\taig_{\infty}}(3613151265)$ is given as follows:
\begin{equation*}
\begin{tikzpicture}
[grow'=down,line width = 0pt,
every node/.style={draw,circle,inner sep=1pt},
level distance=.5cm,
level 1/.style={sibling distance=10mm},
level 2/.style={sibling distance=10mm}]

\node  (root) {$5^2$}
  child {node {$6^2$}}
  child {node {$2^1$}
      child{node{$3^2$}}
      child {node {$1^2$}}
      };
\end{tikzpicture}
\end{equation*}

A \textit{right strict binary search tree} is a labelled rooted binary tree where the label of each
node is greater than or equal to the label of every node in its left subtree, and strictly
less than every node in its right subtree.
The associated insertion algorithm is as follows:
\begin{algo}\label{algo:sylv}\cite[\S~3.3]{HNT05}
Input: A right strict binary search tree $T$ and a symbol $a\in \mathcal{A}$.
If $T$ is empty, create a node and label it $a$. If $T$ is non-empty, examine the label $x$
of the root node: if $a > x$, recursively insert $a$ into the right subtree of the root node;
otherwise recursively insert $a$ into the left subtree of the root note. Output the resulting
tree.
\end{algo}

Let $w_1, \cdots, w_k\in \mathcal{A}$ and $w=w_1\cdots w_k \in \mathcal{A}^*$. Then the combinatorial object ${\rm P}_{\sylv_{\infty}}(w)$ of $w$ is obtained as follows: reading
$w$ from right-to-left, one starts with an empty tree and inserts each symbol in $w$ into a right strict binary search tree according to  Algorithm \ref{algo:sylv}.
For example, ${\rm P}_{\sylv_{\infty}}(3613151265)$ is given as follows:
\begin{equation*}
\begin{tikzpicture}
[grow'=down,line width = 0pt,
every node/.style={draw,circle,inner sep=2pt},
level distance=.5cm,
level 1/.style={sibling distance=20mm},
level 2/.style={sibling distance=10mm},
level 3/.style={sibling distance=10mm},
level 4/.style={sibling distance=10mm}]

\node  (root) {5}
  child {node {6}
    child[missing]
    child {node {6}}
  }
  child {node {2}
    child {node {5}
      child[missing]
      child {node {3}
        child[missing]
        child {node {3}}
      }
    }
    child {node {1}
      child[missing]
      child {node{1}
        child[missing]
        child {node {1}}
      }
    }
  };
\end{tikzpicture}
\end{equation*}

A \textit{left strict binary search tree is} a labelled rooted binary tree where the label of each
node is strictly greater than the label of every node in its left subtree, and less than or
equal to every node in its right subtree.
The associated insertion algorithm is as follows:
\begin{algo}\label{algo:sylv-2}
Input: A left strict binary search tree $T$ and a symbol $a \in \mathcal{A}$.
If $T$ is empty, create a node and label it $a$. If $T$ is non-empty, examine the label $x$
of the root node: if $a < x$, recursively insert $a$ into the left subtree of the root node;
otherwise recursively insert $a$ into the right subtree of the root note. Output the resulting
tree.
\end{algo}

Let $w_1, \cdots, w_k\in \mathcal{A}$ and $w=w_1\cdots w_k \in \mathcal{A}^*$. Then the combinatorial object ${\rm P}_{\sylv^{\sharp}_{\infty}}(w)$ of $w$ is obtained as follows: reading $w$ from left-to-right, one starts with an empty tree and inserts each symbol in $w$ into a left strict binary search tree according to  Algorithm \ref{algo:sylv-2}.
For example, ${\rm P}_{\sylv^{\sharp}_{\infty}}(3613151265)$ is given as follows:
\begin{equation*}
\begin{tikzpicture}
[grow'=down,line width = 0pt,
every node/.style={draw,circle,inner sep=2pt},
level distance=.5cm,
level 1/.style={sibling distance=20mm},
level 2/.style={sibling distance=10mm},
level 3/.style={sibling distance=10mm},
level 4/.style={sibling distance=10mm}]

\node  (root) {3}
  child {node {6}
    child{node{6}}
    child {node {3}
      child {node {5}
        child {node {5}}
        child [missing]
        }
      child [missing]
    }
  }
  child {node {1}
    child {node {1}
      child {node {1}
        child {node {2}}
        child[missing]
        }
      child[missing]
      }
     child[missing]
     };
\end{tikzpicture}
\end{equation*}

Let $w_1, \cdots, w_k\in \mathcal{A}$ and $w=w_1\cdots w_k \in \mathcal{A}^*$. Then the combinatorial object  ${\rm P}_{\baxt_{\infty}}(w)$ of $w$ is obtained by the Algorithms \ref{algo:sylv} and \ref{algo:sylv-2}, that is,
${\rm P}_{{\baxt}_{\infty}}(w)= ({\rm P}_{{\sylv_{\infty}^{\sharp}}}(w), {\rm P}_{{\sylv}_{\infty}}(w))$.

For each $\M \in \{\stal,\taig, \sylv,\sylv^{\sharp}, \baxt\}$, define the relation $\equiv_{\M_{\infty}}$ by
\[
u \equiv_{\M_{\infty}} v \Longleftrightarrow {\rm P}_{\M_{\infty}}(u) = {\rm P}_{\M_{\infty}}(v)
\]
for any $u,v \in \mathcal{A}^*$. In each case, the relation $\equiv_{\M_{\infty}}$ is a congruence on $\mathcal{A}^*$. The stalactic monoid $\stal_{\infty}$ [resp. taiga monoid $\taig_{\infty}$, sylvster monoid $\sylv_{\infty}$, $\sharp$-sylvster monoid $\sylv^{\sharp}_{\infty}$, Baxter monoid $\baxt_{\infty}$] is the factor monoid $ \mathcal{A}^*/_{\equiv_{\M_{\infty}}}$. The rank-$n$ analogue $\stal_{n}$ [resp. $\taig_{n}$, $\sylv_{n}$, $\sylv^{\sharp}_{n}$, $\baxt_{n}$] is the factor monoid $ \mathcal{A}^*_n/_{\equiv_{\M_{\infty}}}$, where the relation $\equiv_{\M_{\infty}}$ is naturally restricted to $\mathcal{A}^*_n\times\mathcal{A}^*_n$ and $\mathcal{A}_n = \{1 < 2 < \cdots < n\}$ is set of the first $n$ natural numbers viewed as a finite ordered alphabet.
It follows from the definition of $\equiv_{\M_{\infty}}$ for any $\M \in \{\stal,\taig, \sylv,\sylv^{\sharp}, \baxt\}$ that each element $[u]_{\equiv_{\M_{\infty}}}$
of the factor monoid $\M_{\infty}$ can be identified with the
combinatorial object ${\rm P}_{\M_{\infty}}(u)$.
In each case, $\M_1$ is a free monogenic monoid $\langle a \rangle=\{1, a, a^2, a^3,\ldots\}$ and thus commutative.
Note that
\[
\M_1 \subset \M_2 \subset \cdots \subset \M_i \subset\M_{i+1}\subset \cdots \subset \M_{\infty}.
\]

The \textit{evaluation} of a word $u\in \mathcal{A}^*$, denoted by $\ev(u)$, is the infinite
tuple of non-negative integers, indexed by $\mathcal{A}$, whose $a$-th element, denoted by $|u|_a$, is the number of times the symbol $a$
appears in $u$; thus this tuple describes the number of each symbol in $\mathcal{A}$ that appears in $u$. It is immediate from the definition of the monoids above that if $u \equiv_{\M_\infty} v$, then $\ev(u) = \ev(v)$, and hence it makes
sense to define the evaluation of an element $p$ of one of these monoids to be the evaluation
of any word representing it. We write $\ev(u) \leqslant \ev(v)$ [resp. $\ev(u) < \ev(v)$] if $|u|_a\leqslant |v|_a$ [resp. $|u|_a < |v|_a$] for each non-negative integer $a$.

A \textit{basis} for an algebra $A$ is a set of identities satisfied by $A$ that axiomatize all identities of $A$. An algebra $A$ is said to be \textit{finitely based} if it has some finite basis. Otherwise, it is said to be \textit{non-finitely based}. The finite basis problem, that is the problem of classifying algebras according to the finite basis property, is one of the most prominent research problems in universal algebra.
Since the first example of {\nfb} finite semigroup was discovered by Perkins~\cite{Perkins69} in the 1960s, the finite basis problem for semigroups has attracted much attention. Now there exist several powerful methods to attack the finite basis problem for finite semigroups (see Volkov~\cite{Vol01} for detail).

In contrast with the finite case, the finite basis problem for infinite
semigroups is less explored. On the one hand, infinite semigroups usually arise in
mathematics as transformation semigroups of an infinite set, or semigroups of relations on an infinite domain, or matrix semigroups over an infinite ring. And all these
semigroups are too big to satisfy any non-trivial identity. On the other
hand, when an infinite semigroup does satisfy non-trivial identities, then deciding if
there is a finite basis remains difficult. Indeed, many of methods designed for finite
semigroups do not apply so that fresh techniques are required.



Since the plactic monoid of infinite rank does not satisfy any non-trivial
identity \cite[Proposition 3.1]{CKK17}, the plactic monoid of infinite rank is finitely based. The plactic monoid of rank $2$  satisfies exactly the same identities as the bicyclic monoid \cite[Remark 4.6]{JK19}, or equivalently
\cite[Theorem 4.1]{DJK18} the monoid of all $2\times 2$ upper triangular tropical matrices. Thus the plactic monoid of rank $2$ is non-finitely based by the result of Chen et al. \cite[Corollary 5.6]{CHLS16}. The plactic monoid of rank $3$ satisfies exactly the same identities as the monoid of all $3\times 3$ upper triangular tropical matrices \cite[Corollary 4.5]{JK19}. Thus the plactic monoid of rank $3$ is non-finitely based by the result of Han et al. \cite{HZL}. The finite basis problems for the plactic monoids of rank greater than or equal to $4$ are still open.
Cain et al. proved that all hypoplactic monoids of rank greater than or equal to $2$ are finitely based and satisfy the same identities \cite{C}. For each $\M \in \{\stal,\taig, \sylv,\sylv^{\sharp}, \baxt\}$, $\M_1$ is a free monogenic monoid and commutative, and so $\M_1$ is finitely based by \cite[Theorem~9]{Perkins69}.  However the finite basis problems for $\M_n$ with $2\leq n \leq \infty$ are still open.

In this paper, we investigate the finite basis problems for all stalactic, taiga, sylvester and Baxter monoids of rank greater than or equal to $2$. It is shown that all stalactic and taiga monoids of rank greater than or equal to $2$ are finitely based and satisfy the same identities, that all sylvester monoids of rank greater than or equal to $2$ are finitely based and satisfy the same identities and that all Baxter monoids of rank greater than or equal to $2$ are finitely based and satisfy the same identities.

This paper is organized as follows. Notation and background information of the paper
are given in Section \ref{sec: prelim}. In Section \ref{sec:3sc}, three sufficient conditions under which  semigroups are
finitely based are given. By applying these sufficient conditions, we solve the finite basis problems for all stalactic, taiga,  sylvester and Baxter monoids of rank greater than or equal to 2 in Section \ref{sec:app}.

\section{Preliminaries} \label{sec: prelim}
Most of the notation and background material of this article are given in this section.
Refer to the monograph of Burris and Sankappanavar~\cite{BS81} for more information.

Let~$\mathcal{X}$ be a countably infinite alphabet.
Elements of $\mathcal{X}$ are called \textit{letters} and  elements of the free monoid $\mathcal{X}^*$ are called \textit{words}.
Let $\bfw \in \mathcal{X}^*, x, y, x_1, x_2, \dots , x_m \in \mathcal{X}$. Then
\begin{itemize}
  \item the \textit{content} of $\bfw$, denoted by $\con(\bfw)$, is the set of letters occurring in $\bfw$;

  \smallskip

  \item $\occ(x, \bfw)$ is the number of occurrences of the letter $x$ in $\bfw$;

  \smallskip

  \item $\overleftarrow{\occ}_y(x, \bfw)$ [resp. $(\overrightarrow{\occ}_y(x, \bfw))$] is the number of occurrences of $x$ before [resp. after] the first [resp. last] occurrence of $y$ in $\bfw$;

  \smallskip

  \item $\bfw$ is said to be \textit{simple} if $\occ(x, \bfw) = 1$ for any $x\in \con(\bfw)$;

  \smallskip

  \item the \textit{initial part} [resp. \textit{final part}] of $\bfw$, denoted by $\ip(\bfw)$ [resp. $\fp(\bfw)$], is the simple word obtained from $\bfw$ by retaining the first [resp. last] occurrence of each letter;

  \smallskip

  \item $\mix(\bfw)$ is the word obtained from $\bfw$ by retaining the first and the last occurrences of each letter;

  \smallskip

  \item $\bfw[x_1, x_2, \dots , x_m]$ denote the word obtained from $\bfw$ by retaining only the occurrences of the letters  $x_1, x_2, \dots , x_m$.
\end{itemize}

A \textit{semigroup identity} is a formal expression $\bfu \approx \bfv$ where $\bfu, \bfv$ are words over the alphabet $\mathcal{X}$. An identity $\bfu\approx \bfv$ is said to be \textit{non-trivial} if $\bfu\neq \bfv$ and \textit{trivial} otherwise.
A semigroup $S$ \textit{satisfies} an identity $\bfu\approx \bfv$ if the equality
$\varphi(\bfu) = \varphi(\bfv)$ holds in $S$ for every possible substitution $\varphi : \mathcal{X}\rightarrow S$.  Denote by $\id(S)$ the set of all non-trivial identities satisfied by $S$.

Clearly any  monoid that satisfies an identity $\bfs \approx \bft$ also satisfies
the identity $\bfs[x_1, x_2, \dots , x_n] \approx \bft[x_1, x_2, \dots , x_n]$ for any $x_1, x_2, \dots , x_n \in\mathcal{X}$,
since assigning the unit element  to a letter $x$ in an identity is effectively
the same as removing all occurrences of $x$.

An \textit{identity system} $\Sigma$ is a collection of non-trivial identities. An identity $\bfu\approx \bfv$ is
said to be \textit{derived} from $\Sigma$ or is a \textit{consequence} of $\Sigma$ if there is a
sequence of words
\[
\bfu = \bfu_1, \bfu_2,\cdots , \bfu_{n-1}, \bfu_n = \bfv
\]
over the alphabet $\mathcal{X}$ such that for every $i = 1, 2, \dots , n-1, \bfu_i = \bfa_i\varphi_i(\bfp_i)\bfb_i, \bfu_{i+1} = \bfa_i\varphi_i(\bfq_i)\bfb_i$ with some words $\bfa_i, \bfb_i \in \mathcal{X}^*$, some endomorphism $\varphi_i:\mathcal{X}^+ \rightarrow \mathcal{X}^+$ and some
identity $\bfp_i\approx \bfq_i \in \Sigma$.

Given an identity system $\Sigma$, we
denote by $\id(\Sigma)$ the set of all consequences of $\Sigma$. An \textit{identity basis} for a semigroup
$S$ is any set $\Sigma \subseteq \id(S)$ such that $\id(\Sigma) = \id(S)$, that is, every identity satisfied
by $S$ can be derived from $\Sigma$. A semigroup $S$ is called \textit{finitely based} if it possesses a
finite identity basis, that is, all identities satisfied by $S$ can be derived from a finite
subset of $\id(S)$; otherwise $S$ is called \textit{non-finitely based}. Two semigroups $S_1$ and $S_2$ are called \textit{equationally equivalent} if $\id(S_1) = \id(S_2)$.

For any semigroup $S$, let $S^1$ be the monoid obtained from $S$ by adjoining a unit
element. Denote by $L_2$, $R_2$, $M$ the left-zero semigroup  of order $2$, the right-zero semigroup
of order $2$ and the free monogenic monoid, whose presentations are given as follows:
\begin{align*}
L_2&=\langle a,b~|~a^2=ab=a, b^2=ba=b\rangle,\\
R_2&=\langle a,b~|~a^2=ba=a, b^2=ab=b \rangle,\\
M&=\langle a \rangle=\{1, a, a^2, a^3, \ldots \}.
\end{align*}

The following results are well-known.

\begin{lemma}\label{L21}
Let $\bfu \approx  \bfv$ be any non-trivial identity. Then
\begin{enumerate}[\rm(i)]
  \item $L^1_2$ satisfies $\bfu \approx \bfv$ if and only if $\ip(\bfu) = \ip(\bfv)$;
  \item $R^1_2$ satisfies $\bfu \approx \bfv$ if and only if $\fp(\bfu) = \fp(\bfv)$;
  \item $M$ satisfies $\bfu \approx \bfv$ if and only if $\occ(x,\bfu)=\occ(x,\bfv)$ for any $x \in \mathcal{X}$.
\end{enumerate}
\end{lemma}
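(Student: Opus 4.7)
The plan is to verify each equivalence by analyzing, for each of the three monoids, precisely how a substitution $\varphi:\mathcal{X}^*\to S$ evaluates an arbitrary word $\bfw$. All three monoids are small enough that $\varphi(\bfw)$ has a transparent combinatorial description, and in each case that description picks out exactly the invariant ($\ip$, $\fp$, or the multiplicity function $\occ$) appearing in the statement.

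For part (i), I would first observe that in $L_2$ every product $x_1x_2\cdots x_n$ collapses to $x_1$, so in $L^1_2$ the value $\varphi(\bfw)$ is either the identity (when every letter of $\bfw$ is sent to $1$) or the $\varphi$-image of the \emph{first} letter of $\bfw$ whose image is not $1$. Consequently $\varphi(\bfw)$ depends only on $\ip(\bfw)$, which proves the ``if'' direction. For the converse I would argue by contrapositive: if $\ip(\bfu)\neq\ip(\bfv)$, pick the leftmost position where the two initial parts disagree; say $\ip(\bfu)=y_1\cdots y_{k-1}y_k\cdots$ and $\ip(\bfv)=y_1\cdots y_{k-1}z\cdots$ with $y_k\neq z$. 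Defining $\varphi(y_i)=1$ for $i<k$, $\varphi(y_k)=a$, $\varphi(z)=b$, and $\varphi(x)=1$ for all other letters $x$, one computes $\varphi(\bfu)=a\neq b=\varphi(\bfv)$ (a small case analysis handles whether $z$ appears in $\ip(\bfu)$ or not, and whether $y_k$ appears in $\ip(\bfv)$). Part (ii) follows by the exact left-right dual argument, using that in $R_2$ products collapse to the \emph{last} factor.

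For part (iii), since $M$ is commutative and every element is a power of $a$, any substitution is determined by choosing non-negative integers $n_x=\occ(a,\varphi(x))$, and then
\[
\varphi(\bfw) \;=\; a^{\sum_{x\in\mathcal{X}} n_x\cdot\occ(x,\bfw)}.
\]
The identity $\bfu\approx\bfv$ therefore holds in $M$ precisely when the linear form $\sum_x n_x\,\occ(x,\bfu)$ equals $\sum_x n_x\,\occ(x,\bfv)$ for every choice of non-negative integers $n_x$, and by specializing $n_x$ to be $1$ on a single letter and $0$ elsewhere, this collapses to $\occ(x,\bfu)=\occ(x,\bfv)$ for every $x\in\mathcal{X}$; the reverse implication is trivial.

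No step here is a serious obstacle; the only care needed is in the bookkeeping for the contrapositive in part (i) (and dually (ii)) to make sure the chosen substitution really separates $\varphi(\bfu)$ from $\varphi(\bfv)$ in the $L^1_2$ product, especially when the disagreeing letters $y_k,z$ may or may not occur elsewhere in the opposite word. Handling the case $k=1$ (so the very first entries of $\ip(\bfu)$ and $\ip(\bfv)$ differ, or one is empty) as a small separate subcase should make the argument completely routine.
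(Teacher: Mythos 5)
Your argument is correct: the paper states this lemma without proof (citing it as well-known), and your computations of $\varphi(\bfw)$ in $L_2^1$, $R_2^1$ and $M$ are exactly the standard folklore justification, including the separating substitutions for the converse directions. The only bookkeeping you flag in part (i) is genuinely harmless, since once one reduces to $\con(\bfu)=\con(\bfv)$ (by sending a letter in the symmetric difference to $a$ and all else to $1$), the disagreeing letters $y_k$ and $z$ each occur in both initial parts and the substitution $y_k\mapsto a$, $z\mapsto b$ separates the two sides.
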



\section{Three sufficient conditions for a semigroup to be finitely based}\label{sec:3sc}%

In this section, we give three sufficient conditions under which a
semigroup is finitely based. The next three theorems are the main results in this section.

\begin{theorem}\label{thm:sc}
Suppose that a semigroup $S$  satisfies the identity $xyx \approx yx^2$ and  for any identity $\bfu\approx \bfv$ satisfied by $S$,
\begin{enumerate}[\rm(i)]
\item $\occ(x,\bfu)=\occ(x,\bfv)$ for any $x \in \mathcal{X}$;
\item $\fp(\bfu)=\fp(\bfv)$.
\end{enumerate}
Then the identity $xyx \approx yx^2$ is an identity basis for $S$, and so $S$ is finitely based.
\end{theorem}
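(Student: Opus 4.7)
\medskip

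\noindent\textit{Proof plan.} The plan is to extract from the single identity $xyx \approx yx^2$ a normal form for words that depends only on the data preserved by hypotheses (i) and (ii): the letter multiplicities $\occ(x,\bfw)$ and the final part $\fp(\bfw)$. Once such a normal form is available, any identity $\bfu \approx \bfv \in \id(S)$ will have $\bfu$ and $\bfv$ reducing to the same word, so $\bfu \approx \bfv$ will automatically be a consequence of $xyx \approx yx^2$.

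First, endomorphic substitution $y \mapsto \bfa$ in $xyx \approx yx^2$ will immediately yield the consequence $x\bfa x \approx \bfa x^2$ for every word $\bfa \in \mathcal{X}^+$; that is, any two occurrences of a letter $x$ may be merged by shifting the left one past the intervening factor. For $\bfw$ with $\fp(\bfw) = x_1 x_2 \cdots x_n$ and $k_i := \occ(x_i,\bfw)$, I will prove by induction on $|\con(\bfw)|$ that
\[
\bfw \approx c(\bfw) := x_1^{k_1} x_2^{k_2} \cdots x_n^{k_n}
\]
is derivable from $\{xyx \approx yx^2\}$. In the inductive step, I single out $x_n$: because $x_n$ has the rightmost final occurrence among the letters of $\bfw$, no letter of $\bfw$ follows its last $x_n$, so $\bfw = \bfa_0 x_n \bfa_1 x_n \cdots x_n \bfa_{k_n-1} x_n$ with each $\bfa_j$ free of $x_n$. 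A sub-induction on $k_n$, repeatedly collapsing the rightmost pair of $x_n$'s via $x_n \bfa x_n \approx \bfa x_n^2$ (at each stage the middle factor may be taken to be the accumulated concatenation of the previous $\bfa_j$'s, which is allowed by substitution), will give $\bfw \approx \bfa_0 \bfa_1 \cdots \bfa_{k_n-1} x_n^{k_n}$. The word $\bfw' := \bfa_0 \bfa_1 \cdots \bfa_{k_n-1}$ has content $\{x_1,\ldots,x_{n-1}\}$, preserves the multiplicities $\occ(x_i,\bfw') = k_i$ for $i<n$, and satisfies $\fp(\bfw') = x_1 \cdots x_{n-1}$, since erasing all copies of one letter preserves the order of the final occurrences of the remaining letters. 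The outer induction then produces $\bfw' \approx x_1^{k_1}\cdots x_{n-1}^{k_{n-1}}$, and hence $\bfw \approx c(\bfw)$.

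With the normal form in hand, the conclusion is immediate: for any $\bfu \approx \bfv \in \id(S)$, hypotheses (i) and (ii) force $c(\bfu) = c(\bfv)$, so $\bfu \approx c(\bfu) = c(\bfv) \approx \bfv$ is derivable from $xyx \approx yx^2$. The only delicate point will be the collapsing step: the applications of $x\bfa x \approx \bfa x^2$ must be scheduled in the correct order (from right to left, merging one $x_n$ at a time into the already-collapsed block) so that at every stage the current word literally contains a substitution instance of $xyx$ as a factor; the sub-induction on $k_n$ is the cleanest way to make this precise.
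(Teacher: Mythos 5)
Your proposal is correct and takes essentially the same route as the paper's proof: both use $xyx \approx yx^2$ to gather all occurrences of each letter at the position of its last occurrence, reducing every word to the normal form $x_1^{e_1}\cdots x_m^{e_m}$ determined by $\fp(\bfw)$ and the multiplicities $\occ(x_i,\bfw)$, after which hypotheses (i) and (ii) force the two normal forms to coincide. Your double induction simply makes precise the rewriting step that the paper asserts in one line.
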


\begin{proof}
It suffices to show that any identity $\bfu \approx \bfv$ satisfied by $S$  can be derived from
$xyx \approx yx^2$.
For any $x\in \con(\bfu)$, if $\occ(x, \bfu)\geq 2$, then the identity $xyx\approx yx^2$ can be used to gather any non-last $x$ in $\bfu$ with the last $x$ in $\bfu$, that is $\bfu$ can be written into the form
\begin{align*}\label{cf}
\bfu=x_1^{e_1}x_2^{e_2}\cdots x_m^{e_m}
\end{align*}
where $\fp(\bfu)=x_1x_2\cdots x_m$. By the same argument, $\bfv$ can be written into the form
\[
\bfv=y_1^{f_1}y_2^{f_2}\cdots y_n^{f_n}
\]
where $\fp(\bfv)=y_1y_2\cdots y_n$.
It follows from (ii) that $m=n, x_i=y_i$ for $i=1,2,\dots,m$. And $e_i=f_i$ for $i=1,2,\dots,m$ can be obtained from (i). Therefore $\bfu=\bfv$. Consequently,
every identity satisfied by $S$ is a consequence of the identity $xyx \approx yx^2$, and so the identity $xyx \approx yx^2$ is an identity basis for $S$.
\end{proof}

\begin{theorem}\label{thm:sc1}
Suppose that a semigroup $S$ satisfies the identity $xysxty \approx yxsxty$ and for any identity $\bfu\approx \bfv$ satisfied by $S$,
\begin{enumerate}[\rm(i)]
\item $\occ(x,\bfu)=\occ(x,\bfv)$ for any $x \in \mathcal{X}$;
\item $\overrightarrow{\occ}_y(x, \bfu)=\overrightarrow{\occ}_y(x, \bfv)$ for any $x,y \in \mathcal{X}$;
\item $\fp(\bfu)=\fp(\bfv)$.
\end{enumerate}
Then the identity $xysxty \approx yxsxty$ is an identity basis for $S$, and so $S$ is finitely based.
\end{theorem}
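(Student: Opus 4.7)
My plan is to adapt the canonical-form strategy of the proof of Theorem~\ref{thm:sc}, using a more delicate rewriting argument tuned to the restricted swap afforded by $xysxty \approx yxsxty$. Fix an identity $\bfu \approx \bfv$ satisfied by $S$. By~(iii), the two words share a common final part $\fp(\bfu) = \fp(\bfv) = y_1 y_2 \cdots y_n$, and the positions of the last occurrences $p_1 < p_2 < \cdots < p_n$ (with $p_0 = 0$) partition each word into aligned blocks, the $i$-th block consisting of the letters lying strictly between the last occurrence of $y_{i-1}$ and the last occurrence of $y_i$. A descending induction on $i$ which telescopes condition~(ii) and then uses~(i) will show that, for every $i \leq j$, the number of copies of $y_j$ inside the $i$-th block coincides in $\bfu$ and in $\bfv$; thus $\bfu$ and $\bfv$ agree block-by-block as multisets and differ only in the internal order of each block.

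Next I will introduce a canonical form $\bfu^\ast$ by rearranging the letters of each block into a fixed order, say sorted by increasing $\fp$-index with equal letters grouped together. Because the block multisets and $\fp$ are completely determined by~(i)--(iii), we have $\bfu^\ast = \bfv^\ast$, and the theorem will follow once I produce derivations $\bfu \to \bfu^\ast$ and $\bfv \to \bfv^\ast$ using only $xysxty \approx yxsxty$; concatenating them yields $\bfu \to \bfu^\ast = \bfv^\ast \leftarrow \bfv$. The technical core is a swap lemma: whenever a block contains adjacent letters $y_j y_k$ with $j > k$, the pair can be rewritten in place to $y_k y_j$ by an application of the identity. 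Substituting $x \mapsto y_j$ and $y \mapsto y_k$, the identity demands an occurrence of $y_j$ strictly to the right of the chosen pair followed later by an occurrence of $y_k$; the boundary $y_k$ at position $p_k$ always supplies the latter, and the intermediate $y_j$-witness is obtained either from another $y_j$ in the same or a subsequent block (whose existence is forced by the multiset data from~(i) and~(ii)) or by enlarging the endomorphism so that $x$ and $y$ denote multi-letter words absorbing the intervening letters. A bubble-sort induction on the total number of out-of-order adjacent pairs across all blocks then drives $\bfu$ to $\bfu^\ast$.

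The main obstacle will be the swap lemma itself. Unlike $xyx \approx yx^2$, the identity $xysxty \approx yxsxty$ never unconditionally swaps two adjacent letters, and corner cases -- for instance a block containing exactly one $y_j$ and one $y_k$ with no helpful $y_j$ located before $p_k$ -- make the required witness pattern not immediately visible. Handling them will demand either preparatory swaps in neighbouring blocks to create the missing witness or a judicious multi-letter choice of substitution that shifts the pattern into position. Extracting the necessary existence statements from the invariants~(i) and~(ii) is where the real technical content sits; once the swap lemma is in place, the rest of the argument is routine iteration.
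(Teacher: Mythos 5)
Your overall route is the same as the paper's: decompose each word into blocks delimited by the positions of the last occurrences (read off from $\fp$), show via a telescoping of condition~(ii) together with~(i) that corresponding blocks have the same content with multiplicity, and reduce both sides to a common canonical form by permuting letters inside blocks with the identity $xysxty \approx yxsxty$. The counting half of your plan is sound and matches the paper's computation $\occ(x_j,\bfu_i)=\overrightarrow{\occ}_{x_{i-1}}(x_j,\bfu)-\overrightarrow{\occ}_{x_i}(x_j,\bfu)$.

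The gap is that your ``swap lemma'' is announced but never proved: you explicitly defer it as the place ``where the real technical content sits,'' so the proposal is incomplete at exactly the step that carries the whole derivation. Moreover, your stated worry suggests the missing argument has not actually been found. You fear a block might contain ``no helpful $y_j$ located before $p_k$,'' but the point is that every letter occurring strictly inside a block is a non-last occurrence, so each letter $a,b$ of an adjacent pair has a later occurrence, in particular its last occurrence at some position $q_a\neq q_b$ lying beyond the block. If $q_a<q_b$, the factorization $\cdots ab\cdots a\cdots b\cdots$ lets you apply $xysxty\to yxsxty$ with $x\mapsto a$, $y\mapsto b$; if $q_b<q_a$, the factorization $\cdots ab\cdots b\cdots a\cdots$ lets you apply the identity right-to-left with $y\mapsto a$, $x\mapsto b$. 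Hence the witness pattern always exists, and no preparatory swaps in neighbouring blocks or multi-letter substitutions are needed; this is exactly the paper's one-line observation that non-last occurrences ``can be moved in any manner.'' The one genuine subtlety, which you gesture at but do not resolve (and which the paper also passes over silently), is that the segments playing the roles of $s$ and $t$ may be empty --- for instance $xyxy\approx yxxy$ is an identity of $\sylv_\infty$ satisfying (i)--(iii) whose left side has too few letters to be an instance of the six-variable identity under nonempty substitutions. This is handled by the usual convention for monoid identities that letters may be evaluated at the unit, equivalently by adjoining the finitely many identities obtained from $xysxty\approx yxsxty$ by deleting $s$ and/or $t$; either way the conclusion that $S$ is finitely based is unaffected.
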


\begin{proof}
It suffices to show that any identity $\bfu \approx \bfv$ satisfied by $S$ can be derived from
$xysxty \approx yxsxty$. Since $S$ satisfies the identity $xysxty \approx yxsxty$, it follows that $\bfu$ can be written into the form
\[
\bfu_1x_1^{e_1}\bfu_2x_2^{e_2}\cdots\bfu_mx_m^{e_m}
\]
where $\fp(\bfu)=x_1\cdots x_m$, $e_i \geq 1$ and $\con(\bfu_i) \subseteq \{x_i, \ldots, x_m\}$ for $1\leq i \leq m$. For each $1\leq i\leq m$, the letters in $\bfu_i$ are not the last occurrences and so can
be moved in any manner by using the identity $xysxty \approx yxsxty$. In particular, any occurrence
of $x_i$ in $\bfu_i$ can be moved to the right and combined with $x_i^{e_i}$ that immediately follows $\bfu_i$. Therefore, we may further assume that $\bfu_i=x_{i+1}^{g_{i+1}}\cdots x_m^{g_m}$  with $g_{i+1}, \ldots, g_m \geq 0$ for $1\leq i\leq m-1$ and $\bfu_m=\emptyset$. By the same argument, $\bfv$ can be written into the form
\[
\bfv=\bfv_1y_1^{f_1}\bfv_2y_2^{f_2}\cdots\bfv_ny_n^{f_n}
\]
where $\fp(\bfv)=y_1y_2\cdots y_n$, $f_i \geq 1$ for $1\leq i \leq n$, $\bfv_i=y_{i+1}^{h_{i+1}}\cdots x_n^{h_n}$ with $h_{i+1}, \ldots, h_n \geq 0$ for $1\leq i\leq n-1$ and $\bfv_n=\emptyset$.
It follows from (iii) that $m=n$ and $x_i=y_i$ for $1\leq i \leq m$. It follows from (i) that $e_1=f_1$. For $1< i \leq m$, since $x_i\not\in \con(\bfu_i\bfv_i)$, it follows that $e_i=\overrightarrow{\occ}_{x_{i-1}}(x_i,\bfu)$ and $f_i=\overrightarrow{\occ}_{x_{i-1}}(x_i,\bfv)$. Hence by (ii) $e_i=f_i$  for  $1< i \leq m$. Therefore $e_i=f_i$  for  $1 \leq i \leq m$.

Clearly, $\bfu_m=\bfv_m=\emptyset$. To show that $\bfu_i=\bfv_i$ for $i=1, 2,\dots, m-1$, it suffices to show that $\occ(x_j,\bfu_i)=\occ(x_j,\bfv_i)$ for $i+1\leq j\leq m$. Since
\begin{align*}
\occ(x_j,\bfu_1)&=\occ(x_j, \bfu)-\overrightarrow{\occ}_{x_1}(x_j, \bfu)\\
&=\occ(x_j, \bfv)-\overrightarrow{\occ}_{x_1}(x_j, \bfv)\\
&=\occ(x_j,\bfv_1)
\end{align*}
by (i) and (ii), it follows that $\bfu_1=\bfv_1$. Since
\begin{align*}
\occ(x_j,\bfu_i)&=\overrightarrow{\occ}_{x_{i-1}}(x_j, \bfu)-\overrightarrow{\occ}_{x_i}(x_j, \bfu)\\
&=\overrightarrow{\occ}_{x_{i-1}}(x_j, \bfv)-\overrightarrow{\occ}_{x_i}(x_j, \bfv)\\
&=\occ(x_j,\bfv_i)
\end{align*}
by (ii), it follows that $\bfu_i=\bfv_i$ for $1<i\leq m-1$.  Therefore $\bfu=\bfv$. Consequently,
every identity satisfied by $S$ is a consequence of the identity $xysxty \approx yxsxty$, and so the identity $xysxty \approx yxsxty$ is an identity basis for $S$.
\end{proof}

\begin{theorem}\label{thm:sc2}
Suppose that a semigroup $S$  satisfies the identities
\begin{subequations}\label{id:baxt}
\begin{gather}
ysxt\,xy\,hxky \approx ysxt\,yx\,hxky, \label{id:a}\\
xsyt\,xy\,hxky \approx xsyt\,yx\,hxky,\label{id:b}
\end{gather}
\end{subequations}
and for any identity $\bfu\approx \bfv$ satisfied by $S$,
\begin{enumerate}[\rm(i)]
\item $\occ(x,\bfu)=\occ(x,\bfv)$ for any $x \in \mathcal{X}$;
\item $\overleftarrow{\occ}_y(x, \bfu)=\overleftarrow{\occ}_y(x, \bfv), \overrightarrow{\occ}_y(x, \bfu)=\overrightarrow{\occ}_y(x, \bfv)$ for any $x,y \in \mathcal{X}$;
\item $\ip(\bfu)=\ip(\bfv), \fp(\bfu)=\fp(\bfv)$.
\end{enumerate}
Then the identities \eqref{id:baxt}
constitute an identity basis for $S$, and so $S$ is finitely based.
\end{theorem}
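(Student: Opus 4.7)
The plan is to follow the template of the proofs of Theorems \ref{thm:sc} and \ref{thm:sc1}: given an identity $\bfu \approx \bfv$ satisfied by $S$, use \eqref{id:a} and \eqref{id:b} to rewrite both $\bfu$ and $\bfv$ into a common canonical form, then invoke (i)--(iii) to conclude that these canonical forms coincide.

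The first move is to read off the ``skeleton'' of $\bfu$. Writing $\mix(\bfu) = z_1 z_2 \cdots z_k$, the word factors uniquely as $\bfu = z_1 \bfu_1 z_2 \bfu_2 \cdots z_{k-1} \bfu_{k-1} z_k$, where each block $\bfu_i$ consists solely of \emph{middle} occurrences, i.e.\ occurrences that are neither the first nor the last of their letter. Crucially, the identities \eqref{id:a} and \eqref{id:b} forbid the letter in either swap position from being a first or last occurrence of its letter (each such position is flanked by the required copies on both sides), so applying either identity never disturbs the skeleton; in particular the decomposition above is preserved.

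The technical core of the argument is the claim that within each block $\bfu_i$ any two adjacent letters may be transposed using \eqref{id:a} or \eqref{id:b}. If $\alpha, \beta$ are distinct adjacent letters inside $\bfu_i$, both are middle occurrences, so their first occurrences $f_\alpha, f_\beta$ lie strictly before the pair while their last occurrences $l_\alpha, l_\beta$ lie strictly after. I would partition into four cases according to the relative orders of $f_\alpha, f_\beta$ and of $l_\alpha, l_\beta$ and, in each case, choose the substitution assigning $\alpha, \beta$ to $x, y$ in one of \eqref{id:a} or \eqref{id:b} so that both the prefix pattern and the suffix pattern $hxky$ are realised. For instance, when $l_\alpha < l_\beta$ one sets $x = \alpha$, $y = \beta$ and uses \eqref{id:b} if $f_\alpha < f_\beta$ or \eqref{id:a} if $f_\beta < f_\alpha$; the case $l_\beta < l_\alpha$ is symmetric with the roles of $x, y$ swapped. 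Iterating adjacent transpositions then sorts each $\bfu_i$ into, say, lexicographic order, yielding a canonical form $z_1 \widetilde{\bfu}_1 z_2 \cdots z_{k-1} \widetilde{\bfu}_{k-1} z_k$ depending only on $\mix(\bfu)$ and on the multiset of letters appearing in each $\bfu_i$.

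To finish, I would verify that (i)--(iii) force the canonical forms of $\bfu$ and $\bfv$ to agree. From (iii), the sequences $\ip(\bfu), \fp(\bfu)$ determine the relative orders among first and among last occurrences, while (i) and (ii) pin down the relative order of any first-versus-last pair (for example, the first $x$ precedes the last $y$ if and only if $\overrightarrow{\occ}_y(x, \bfu) < \occ(x, \bfu)$, and symmetrically using $\overleftarrow{\occ}$); hence $\mix(\bfu) = \mix(\bfv)$. For each $i$ and letter $x$, the count $\occ(x, \bfu_i)$ equals the number of $x$'s strictly between the anchors $z_i$ and $z_{i+1}$, which is computable from (i) and (ii) via formulas such as ``the number of $x$'s at positions up to the first occurrence of $y$ equals $\overleftarrow{\occ}_y(x,\bfu)$, plus one if $x = y$'' when $z_i$ is a first occurrence of $y$, and ``equals $\occ(x,\bfu) - \overrightarrow{\occ}_y(x,\bfu)$'' when $z_i$ is a last occurrence of $y$. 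Therefore $\occ(x, \bfu_i) = \occ(x, \bfv_i)$ for all $i, x$, the canonical forms coincide, and $\bfu \approx \bfv$ is a consequence of \eqref{id:baxt}. The main obstacle is the transposition step: one must carefully check that across all four combinations of relative orderings of $f_\alpha, f_\beta$ and of $l_\alpha, l_\beta$, one of \eqref{id:a}, \eqref{id:b} always supplies the required swap. Once this flexibility is established, the remainder is a careful bookkeeping built on the invariants $\occ, \overleftarrow{\occ}, \overrightarrow{\occ}, \ip, \fp$ preserved by the identities.
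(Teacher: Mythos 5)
Your proposal is correct and follows essentially the same route as the paper's proof: decompose $\bfu$ and $\bfv$ along $\mix(\cdot)$, use \eqref{id:a} and \eqref{id:b} to sort the blocks of middle occurrences into a canonical form, and then use the invariants (i)--(iii) to force the two canonical forms to coincide. Your direct pairwise-comparison argument for $\mix(\bfu)=\mix(\bfv)$ and your cumulative-count formulas for the block multiplicities are just reorganizations of the paper's induction and case analysis, and your four-case check of when a transposition is available is a (welcome) elaboration of a step the paper only asserts.
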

\begin{proof}
It suffices to show that any identity $\bfu \approx \bfv$ satisfied by $S$ can be derived from \eqref{id:baxt}.
Suppose that $\con(\bfu)=\{x_1,x_2,\dots,x_r\}$ and  $\mix(\bfu)=a_1 a_2 \cdots a_{m+1}$ with $a_i\in \con(\bfu)$ for $i=1,2,\ldots,m+1$. Clearly, $\bfu$ can be written into the form
\[
\bfu = a_1\bfu_1a_2\bfu_2\cdots a_m\bfu_ma_{m+1}
\]
where $\bfu_1,\dots, \bfu_m \in \mathcal{X}^*$. Since each occurrence of letters in $\bfu_i$
is neither its first occurrence nor its last occurrence in $\bfu$,  the letters in $\bfu_i$ can be
permutated within $\bfu_i$ by the identities \eqref{id:baxt} in any manner. Therefore, we may assume that $\bfu_i = x^{i_{j_1}}_1 x^{i_{j_2}}_2\cdots x^{i_{j_r}}_r$ with some non-negative integers $i_{j_1} , i_{j_2} ,\dots, i_{j_r}$. Suppose that $\con(\bfv)=\{y_1,y_2,\dots,y_s\}$ and  $\mix(\bfv)=\{b_1, b_2,\dots, b_{n+1}\}$ with $b_i\in \con(\bfv)$ for $i=1,2,\ldots,n+1$. By the same argument, $\bfv$ can be written into the form
\[
\bfv = b_1\bfv_1b_2\bfv_2\cdots b_n\bfv_nb_{n+1}
\]
where $\bfv_i = y^{i_{k_1}}_1 y^{i_{k_2}}_2\cdots y^{i_{k_s}}_s$ with some non-negative integers $i_{k_1} , i_{k_2} ,\dots, i_{k_s}$.
In the following, we will show that if $\bfu \approx \bfv$ satisfies the conditions (i)--(iii), then $\bfu=\bfv$.

It follows from (i) that either $\occ(x, \bfu)= \occ(x, \bfv)=1$ or $\occ(x, \bfu)=\occ(x, \bfv)\geq 2$. Hence $m=n$.

Next we show that $\mix(\bfu) = \mix(\bfv)$.
Clearly, $a_1 = b_1$ by (iii). Proceeding by induction, suppose that $a_1 \cdots a_{k-1} = b_1 \cdots b_{k-1}$. Assume that $a_k = x$ and $b_k = y$.
If both $a_k$ and $b_k$ are the first occurrences
of $x$ in $\bfu$ and $y$ in $\bfv$ respectively, then it follows from $\ip(\bfu)=\ip(\bfv)$ that $a_k = b_k$; if both $a_k$ and $b_k$ are the last occurrences of $x$ in $\bfu$ and $y$ in $\bfv$ respectively, then it follows from $\fp(\bfu)=\fp(\bfv)$ that $a_k = b_k$. Otherwise, by symmetry, we may assume that $a_k$ is the first occurrence of $x$ in $\bfu$ and $b_k$ is the last occurrence
of $y$ in $\bfv$. Then by the above arguments the result $a_k = b_k$ still holds when either $\occ(x,\bfu)=1$ or $\occ(y,\bfv)=1$. Therefore, we may assume that $\occ(x,\bfu), \occ(y,\bfv)\geq 2$.

Suppose that $a_k \neq b_k$. Then $x \not\in \con(a_1\bfu_1\cdots a_{k-1}\bfu_{k-1})$ by $a_k$ being the first occurrence of $x$ in $\bfu$ and $y \not\in \con(\bfv_kb_{k+1} \cdots \bfv_nb_{n+1})$ by $b_k$ being the last occurrence of $y$ in $\bfv$. Hence it follows from $\occ(y,\bfv)\geq 2$ that $\occ(y, b_1\cdots b_{k-1})=1$, so that $\occ(y, a_1\cdots a_{k-1})=\occ(y, a_{k+1}\cdots a_{n+1})=1$. Clearly, $x \not\in\con(a_1\cdots a_{k-1})=\con(b_1\cdots b_{k-1})$. Hence
$\occ(y, \bfv)=\overleftarrow{\occ}_x(y,\bfv)$. Now by (i) and (ii),
\[
\occ(y, \bfu) = \occ(y, \bfv)=\overleftarrow{\occ}_x(y,\bfv) = \overleftarrow{\occ}_x(y,\bfu).
\]
But $\occ(y, \bfu)= \overleftarrow{\occ}_x(y,\bfu)$ is impossible since $\occ(y, a_{k+1}\cdots a_{n+1})=1$, hence $a_k = b_k$.
Therefore $a_i = b_i$ for all $i = 1,...,n + 1$ by induction, and so $\mix(\bfu) = \mix(\bfv)$.

Finally, we show that $\bfu_k =\bfv_k$ for each $k=1, \ldots, n$. By the forms of $\bfu$ and $\bfv$, it suffices to show
that $\occ(z, \bfu_k) = \occ(z, \bfv_k)$ for any $z \in \con(\bfu_k\bfv_k)$ and $k = 1, 2,\ldots, n$. Let
$\occ(z, \bfu_k) = s$ and $\occ(z, \bfv_k) = t$. There are two cases.

{\bf Case~1.} $a_k = a_{k+1} = x$. Then $a_k$ and $a_{k+1}$ are the first and the last occurrences of $x$ in both $\bfu$ and $\bfv$.
If $z = x$, then by (i),
\[
2 + s = \occ(x, \bfu) = \occ(x, \bfv)=2+ t.
\]
Hence $s = t$.
If $z \neq x$, then by (i),
\[
\overleftarrow{\occ}_x(z, \bfu) + s + \overrightarrow{\occ}_x(z, \bfu) = \occ(z, \bfu) = \occ(z, \bfv) =\overleftarrow{\occ}_x(z, \bfv) + t + \overrightarrow{\occ}_x(z, \bfv).
\]
Thus $s = t$ follows from (ii).

{\bf Case~2.} $a_k = x \neq y = a_{k+1}$. By symmetry, there are three subcases.

{\bf 2.1.} $a_k$ and $a_{k+1}$ are the first occurrences of $x$ and $y$ respectively in both $\bfu$ and $\bfv$. Clearly, $z \neq y$.
If $z = x$, then by (ii),
\[
1 + s = \overleftarrow{\occ}_y(x, \bfu) =\overleftarrow{\occ}_y(x, \bfv)=1+ t.
\]
Hence $s = t$.
If $z \neq x$, then by (ii),
\[
\overleftarrow{\occ}_x(z, \bfu) + s = \overleftarrow{\occ}_y(z, \bfu) = \overleftarrow{\occ}_y(z, \bfv) = \overleftarrow{\occ}_x(z, \bfv) + t.
\]
Thus $s = t$ follows from (ii).

{\bf 2.2.} $a_k$ is the first occurrence of $x$ and $a_{k+1}$ is the last occurrence
of $y$ in both $\bfu$ and $\bfv$.
If $z = x$, then by (i),
\[
1 + s + \overrightarrow{\occ}_y(x, \bfu) = \occ(x, \bfu) = \occ(x, \bfv)=1+ t + \overrightarrow{\occ}_y(x, \bfv).
\]
Thus $s = t$ follows from (ii).
If $z = y$, then by (i),
\[
\overleftarrow{\occ}_x(y, \bfu) + s +1= \occ(y, \bfu) = \occ(y, \bfv) = \overleftarrow{\occ}_x(y, \bfv) + t + 1.
\]
Thus $s = t$ follows from (ii).
If $z \neq x, y$, then by (i),
\[
\overleftarrow{\occ}_x(z, \bfu) + s + \overrightarrow{\occ}_y(z, \bfu) = \occ(z, \bfu) = \occ(z, \bfv) = \overleftarrow{\occ}_x(z, \bfv) + t + \overrightarrow{\occ}_y(z, \bfv).
\]
Thus $s = t$ follows from (ii).

{\bf 2.3.} $a_k$ is the last occurrence of $x$ and $a_{k+1}$ is the first occurrence
of $y$ in both $\bfu$ and $\bfv$. Clearly, $z \neq x, y$. Then by (i),
\[
\overrightarrow{\occ}_x(z, \bfu) + \overleftarrow{\occ}_y(z, \bfu) - s = \occ(z, \bfu) = \occ(z, \bfv) = \overrightarrow{\occ}_x(z, \bfv) + \overleftarrow{\occ}_y(z, \bfv) - t.
\]
Thus $s = t$ follows from (ii).

Hence $\bfu_k = \bfv_k$ for $k = 1, 2,\cdots,n$. Therefore $\bfu = \bfv$. Consequently,
every identity satisfied by $S$ is a consequence of the identities in \eqref{id:baxt}, and so the identities \eqref{id:baxt} constitute an identity basis for $S$.
\end{proof}

\section{Finite basis problems for stalactic, taiga,  sylvester and Baxter monoids}\label{sec:app}%

In this section, by applying the sufficient conditions given in Section \ref{sec:3sc}, we solve the finite basis problems for all stalactic, taiga, sylvester and Baxter monoids of rank greater than or equal to $2$.

\subsection{Finite basis problems for stalactic and taiga monoids}

\begin{lemma}\cite[Propositions 15 and 16]{CM16}\label{lem::stal id}
Both the stalatic monoid $\stal_{\infty}$ and the taiga monoid $\taig_{\infty}$ satisfy the identity $xyx \approx yx^2$.
\end{lemma}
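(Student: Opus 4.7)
The plan is to reduce both monoid assertions to a common combinatorial invariant. Specifically, I claim that for each $\M\in\{\stal,\taig\}$,
\[
\bfu\equiv_{\M_\infty}\bfv \iff \fp(\bfu)=\fp(\bfv) \text{ and } \ev(\bfu)=\ev(\bfv),
\]
and the identity then follows by checking these two invariants on both sides. The characterisation comes from unpacking Algorithms~\ref{algo:stal} and \ref{algo:taig}: both insertions read the argument from right to left, and a letter contributes new structure (a new column at the left, respectively a new BST node) the first time it is encountered, and only modifies existing data (extends its column downward, respectively increments a multiplicity) thereafter. Hence the distinct letters of $w$ are placed in the order of their rightmost occurrences in $w$. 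For $\stal_\infty$ this is essentially the remark already recorded just after Algorithm~\ref{algo:stal}: the top row read left-to-right is $\fp(w)$ and the column heights encode $\ev(w)$. For $\taig_\infty$ the shape of the tree is determined uniquely by the insertion sequence of distinct letters (standard BST uniqueness, proved by induction on the number of distinct letters, since each new distinct letter is appended at a uniquely determined leaf), and the node multiplicities give $\ev(w)$.

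For the identity $xyx\approx yx^2$, fix any $\bfu,\bfv\in\mathcal{A}^*$ representing $x$ and $y$; it suffices to verify that $\bfu\bfv\bfu\equiv_{\M_\infty}\bfv\bfu^2$. Equality of evaluations is immediate since both words are a concatenation of two copies of $\bfu$ and one of $\bfv$. For $\fp$, in either word the rightmost occurrence of each letter of $\con(\bfu)$ lies inside the final $\bfu$-block at the same internal position, while the rightmost occurrence of each letter of $\con(\bfv)\setminus\con(\bfu)$ lies inside the unique $\bfv$-block at the same internal position. Retaining only these rightmost occurrences therefore produces the same simple word on both sides, namely the concatenation $\fp(\bfv[x_1,\ldots,x_k])\,\fp(\bfu)$ with $\{x_1,\ldots,x_k\}=\con(\bfv)\setminus\con(\bfu)$. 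Combined with the characterisation this gives $\bfu\bfv\bfu\equiv_{\M_\infty}\bfv\bfu^2$, as required.

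The only mild obstacle is phrasing the BST-shape uniqueness for Algorithm~\ref{algo:taig} cleanly; once that induction is in place, the characterisation of $\equiv_{\M_\infty}$ and the subsequent verification of $\fp(\bfu\bfv\bfu)=\fp(\bfv\bfu^2)$ are routine bookkeeping. Notably, the same proof works verbatim for $\stal_\infty$ and $\taig_\infty$ because these two monoids share the same pair of invariants $(\fp,\ev)$.
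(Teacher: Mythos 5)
Your argument is essentially correct, but note that the paper does not prove this lemma itself: it is imported from \cite[Propositions 15 and 16]{CM16}, so your self-contained derivation is doing genuinely more work than the text does. The core observation you rely on is sound: reading $w$ from right to left, the first time each distinct letter is met is at its rightmost occurrence in $w$, so the top row of the stalactic tableau (respectively, the shape of the binary search tree with multiplicities) is a function of $\fp(w)$, while the column heights (respectively, the node multiplicities) are a function of $\ev(w)$; hence ${\rm P}_{\M_{\infty}}(w)$ is determined by the pair $(\fp(w),\ev(w))$ for $\M\in\{\stal,\taig\}$. Your bookkeeping $\fp(\bfu\bfv\bfu)=\fp(\bfv[x_1,\dots,x_k])\,\fp(\bfu)=\fp(\bfv\bfu^2)$ and $\ev(\bfu\bfv\bfu)=2\,\ev(\bfu)+\ev(\bfv)=\ev(\bfv\bfu^2)$ is exactly what is needed, and the reduction of the identity to checking all word substitutions $\bfu,\bfv\in\mathcal{A}^*$ is legitimate. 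One caveat: you state the characterisation as an ``iff'', but only the implication from $(\fp,\ev)$-agreement to $\equiv_{\M_{\infty}}$-equivalence is used, and the converse is in fact \emph{false} for the taiga monoid. Distinct insertion orders can build the same binary search tree: for instance $132$ and $312$ both yield the tree with root $2$ and children $1$ and $3$ (all multiplicities $1$), so $132\equiv_{\taig_{\infty}}312$ although $\fp(132)=132\neq 312=\fp(312)$. Thus $\equiv_{\taig_{\infty}}$ is strictly coarser than ``same $\fp$ and same $\ev$''; you should either drop the ``iff'' or assert it only for $\stal_{\infty}$, where it does hold. This slip does not affect the validity of your proof of the identity, since only the true direction is invoked.
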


\begin{theorem}
 The identity $xyx\approx yx^2$ is a finite identity basis for the monoids $\stal_n$ and $\taig_n$ whenever $2\leq n\leq \infty$. Therefore all stalactic and taiga monoids of rank greater than or equal to $2$ are equationally equivalent.
\end{theorem}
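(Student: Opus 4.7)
The plan is to verify the hypotheses of Theorem \ref{thm:sc} for each $\stal_n$ and $\taig_n$ with $2\leq n \leq \infty$, using the identity $xyx\approx yx^2$. That this identity is satisfied is immediate from Lemma \ref{lem::stal id} together with the inclusions $\stal_n \subseteq \stal_\infty$ and $\taig_n\subseteq \taig_\infty$. It remains to show that every identity $\bfu\approx \bfv$ satisfied by $\stal_n$ or $\taig_n$ also satisfies conditions (i) and (ii) of Theorem \ref{thm:sc}.

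To establish condition (i), I would observe that the submonoid $\langle 1\rangle$ of $\stal_n$ (respectively of $\taig_n$) is isomorphic to the free monogenic monoid $M$: in the stalactic case $1^k$ is a single column of height $k$, and in the taiga case $1^k$ is the single node labelled $1$ with multiplicity $k$. Hence any identity satisfied by $\stal_n$ or $\taig_n$ is also satisfied by $M$, and Lemma \ref{L21}(iii) immediately yields $\occ(x,\bfu)=\occ(x,\bfv)$ for every $x\in\mathcal{X}$.

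For condition (ii), I would exhibit $R_2^1$ as a monoid homomorphic image of $\stal_n$ and of $\taig_n$ for $n\geq 2$. In the stalactic case, define $\pi_{\stal}\colon \stal_n \to R_2^1$ by sending the empty tableau to the identity $1$ of $R_2^1$, and sending a non-empty tableau $T$ to $a$ if the rightmost symbol of the top row of $T$ equals $1$ and to $b$ otherwise. Since Algorithm \ref{algo:stal} processes a word $w$ right-to-left and places the very first symbol inserted (namely, the last letter of $w$) at the top of the rightmost column of the top row, the rightmost symbol of the top row of $\mathrm{P}_{\stal_\infty}(w)$ is precisely the last letter of $w$. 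It follows that $\pi_\stal(TS)=\pi_\stal(S)$ whenever $S$ is non-empty, matching the right-zero law of $R_2$, so $\pi_\stal$ is a surjective monoid homomorphism. An analogous map $\pi_{\taig}\colon \taig_n \to R_2^1$ defined via the root label of the binary search tree (which by Algorithm \ref{algo:taig} is again the last letter of the representing word) handles the taiga case. Consequently $R_2^1$ satisfies every identity satisfied by $\stal_n$ or $\taig_n$, and Lemma \ref{L21}(ii) gives $\fp(\bfu)=\fp(\bfv)$.

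With conditions (i) and (ii) in hand, Theorem \ref{thm:sc} shows that $\{xyx\approx yx^2\}$ is an identity basis for both $\stal_n$ and $\taig_n$ for every $n$ with $2\leq n \leq \infty$; since they share the same single-identity basis, they are pairwise equationally equivalent. The main technical point is verifying that the proposed invariants -- the rightmost symbol of the top row for stalactic tableaux, and the root label for taiga trees -- are preserved under right multiplication, but in each case this is a one-line observation about the insertion algorithm, so I do not expect any serious obstacle.
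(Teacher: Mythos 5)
Your proposal is correct and follows the same overall strategy as the paper: reduce everything to Theorem \ref{thm:sc}, obtain the identity $xyx\approx yx^2$ from Lemma \ref{lem::stal id} and the inclusions into the infinite-rank monoid, and derive condition (i) from the free monogenic submonoid $\langle 1\rangle$ via Lemma \ref{L21}(iii). The one place you diverge is condition (ii): the paper argues by contradiction, restricting a hypothetical identity with $\fp(\bfu)\neq\fp(\bfv)$ to two letters, substituting $2$ and $1$, and observing that the last symbol of a word determines the root node (resp.\ the rightmost top-row entry); you package that same combinatorial observation as a surjective monoid homomorphism $\pi_{\stal}\colon\stal_n\to R^1_2$ (resp.\ $\pi_{\taig}\colon\taig_n\to R^1_2$) and then invoke Lemma \ref{L21}(ii). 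Both routes are valid and rest on the identical fact about the insertion algorithms; yours has the small advantage of treating the stalactic case directly, whereas the paper routes it through the (asserted but unproved) isomorphism $\stal_2\cong\taig_2$. Two minor points you should make explicit: surjectivity of your homomorphisms is exactly where the hypothesis $n\geq 2$ enters (you need elements whose representing words end in $1$ and in a symbol different from $1$), and well-definedness of $\pi_{\stal}$ and $\pi_{\taig}$ is automatic because they are defined on the tableaux and trees themselves rather than on representing words.
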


\begin{proof}
Clearly, we only need to show that each of the monoids $\stal_n$ and $\taig_n$ for any $2\leq n\leq \infty$ can be defined by the identity $xyx\approx yx^2$.
First we show that each of the monoids  $\taig_n$ for any $2\leq n \leq \infty$ can be defined by the identity $xyx \approx yx^2$.
Note that
\[
\taig_1 \subset \taig_2 \subset \cdots \subset \taig_n \subset \cdots \subset \taig_{\infty}.
\]
By Theorem \ref{thm:sc}, it suffices to show that $\taig_{\infty}$ satisfies the identity  $xyx\approx yx^2$ and $\taig_2$ satisfies the conditions (i) and (ii) in Theorem \ref{thm:sc}. Clearly, $\taig_{\infty}$ satisfies the identity  $xyx\approx yx^2$ by Lemma~\ref{lem::stal id}.

Let $\bfu\approx \bfv$ be any identity satisfied by the monoid $\taig_2$. Since $\taig_1$ is a free monogenic monoid, it follows from Lemma~\ref{L21} (iii) that $\occ(x,\bfu)=\occ(x,\bfv)$ for any $x \in \mathcal{X}$, and so the condition (i) holds in $\taig_2$. Suppose that $\fp(\bfu)\neq \fp(\bfv)$. Then there exist some $x,y$ such that $\taig_2$ satisfies $\bfa yx^s=\bfu[x,y]\approx \bfv[x,y]=\bfb xy^t$ for some $s,t \geq 1$ and $\bfa, \bfb \in \{x,y\}^*$. Let $\varphi$ be a substitution such that $x\mapsto 2, y\mapsto 1$. Then $\varphi(\bfu[x,y])$ ends with $2$ and $\varphi(\bfv[x,y])$ ends with $1$. Since the rightmost symbol in a word $w$ determines the root node of $\mathrm{P}_{\taig_2}(w)$, it follows that $\varphi(\bfu[x,y]) \ne \varphi(\bfv[x,y])$.  This implies that $\taig_2$ does not satisfy $\bfu[x,y]\approx \bfv[x,y]$, a contradiction. Hence $\fp(\bfu)= \fp(\bfv)$, and so the condition (ii) holds.

Next we show that each of the monoids $\stal_n$ for any $2\leq n\leq \infty$ can be defined by the identity $xyx \approx yx^2$.
Note that
\[
\stal_1 \subset \stal_2 \subset \cdots \subset \stal_n \subset \cdots \subset \stal_{\infty}.
\]
By Theorem \ref{thm:sc}, it suffices to show that $\stal_{\infty}$ satisfies the identity  $xyx\approx yx^2$ and $\stal_2$ satisfies the conditions (i) and (ii) in Theorem \ref{thm:sc}. Clearly, $\stal_{\infty}$ satisfies the identity  $xyx\approx yx^2$ by Lemma~\ref{lem::stal id}. It is routine to show that $\stal_2$ is isomorphic to $\taig_2$. Hence $\stal_2$ satisfies the conditions (i) and (ii) in Theorem \ref{thm:sc} by the above arguments.

Consequently, each of monoids $\stal_n, \taig_n$ for any $2\leq n\leq \infty$ can be defined by the identity $xyx \approx yx^2$, and so all of them are equationally equivalent.
\end{proof}

\subsection{Finite basis problem for sylvester monoid}

\begin{lemma}\label{lem:property of sylv'id}
Let $\bfu\approx \bfv$ be any identity satisfied by the monoid $\sylv_2$. Then the monoid $\sylv_2$ satisfies the conditions (i)--(iii) in Theorem \ref{thm:sc1}.
\end{lemma}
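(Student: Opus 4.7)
The plan is to verify conditions~(i)--(iii) by exploiting the chain $\sylv_1 \subset \sylv_2$ for~(i) and, for~(ii) and~(iii), reducing to the binary alphabet $\{1,2\}$ via substitution into $\sylv_2$ and extracting invariants from the shape of the sylvester tree. Throughout, the key structural fact is that, because words are read right-to-left, the root of ${\rm P}_{\sylv_2}(w)$ is the rightmost letter of $w$.

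For~(i), since $\sylv_1$ is the free monogenic monoid $M$ and $\sylv_1 \subset \sylv_2$, any identity $\bfu \approx \bfv$ satisfied by $\sylv_2$ also holds in $M$, and Lemma~\ref{L21}(iii) yields $\occ(x,\bfu) = \occ(x,\bfv)$ for every $x \in \mathcal{X}$.

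For~(iii), I would argue by contradiction. Assume $\fp(\bfu) \ne \fp(\bfv)$. Since~(i) gives $\con(\bfu) = \con(\bfv)$, the discrepancy must lie in the relative order of the last occurrences of some two distinct letters $x, y$; restricting to $\{x, y\}$ (i.e.\ sending all other letters to the identity of $\sylv_2$) we may assume $\bfu[x,y]$ and $\bfv[x,y]$ end in different letters. Further substituting $x \mapsto 2$ and $y \mapsto 1$ in $\sylv_2$ produces two words in $\{1,2\}^*$ with different rightmost letters, hence different roots in their sylvester trees, contradicting that $\sylv_2$ satisfies $\bfu \approx \bfv$.

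Condition~(ii) is where I expect the main obstacle. For distinct $x,y$ (the case $x=y$ is immediate since $\overrightarrow{\occ}_x(x,\cdot) = 0$), substitute $\varphi\colon x \mapsto 2$, $y \mapsto 1$, and all other letters to the identity of $\sylv_2$, to obtain words $\bfu^*, \bfv^* \in \{1,2\}^*$ whose sylvester trees coincide. The core claim to establish is: for any $w \in \{1,2\}^*$ containing the letter~$1$, the value $\overrightarrow{\occ}_1(2, w)$ equals the length of the maximal chain of $2$-labelled nodes descending from the root of ${\rm P}_{\sylv_2}(w)$ via left children. To prove this, write $w = w' \cdot 1 \cdot 2^a$ where the displayed $1$ is the last occurrence of $1$ in $w$ and $a = \overrightarrow{\occ}_1(2, w)$; the first $a$ right-to-left insertions of $2$'s build precisely a left-chain of $a$ copies of $2$, the next insertion (of the displayed $1$) attaches a $1$ as the left child of the deepest $2$, and any subsequent insertion of $1$ or $2$ must descend this entire chain (since $1,2 \leq 2$) and then branch off at or below the bottom $1$, so the chain of $2$'s is never extended. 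The degenerate cases in which $1$ or $2$ fails to occur in $\bfu^*$ follow directly from~(i). Consequently the chain length is a tree invariant, giving $\overrightarrow{\occ}_y(x,\bfu) = \overrightarrow{\occ}_1(2,\bfu^*) = \overrightarrow{\occ}_1(2,\bfv^*) = \overrightarrow{\occ}_y(x,\bfv)$.
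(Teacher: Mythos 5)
Your proposal is correct and follows essentially the same route as the paper: condition (i) via the free monogenic submonoid, conditions (ii) and (iii) by restricting to two letters, substituting $x\mapsto 2$, $y\mapsto 1$, and reading off from the resulting right strict binary search trees that the root determines the last letter and that the left descending chain of $2$'s below the root has length exactly $\overrightarrow{\occ}_1(2,\cdot)$. Your explicit formulation of the chain length as a tree invariant (including the degenerate cases) is just a slightly more careful packaging of the paper's picture-based argument.
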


\begin{proof}
Let $\bfu\approx \bfv$ be any identity satisfied by $\sylv_2$. Since $\sylv_1$ is a free monogenic monoid, it follows from Lemma~\ref{L21} (iii) that $\occ(x,\bfu)=\occ(x,\bfv)$ for any $x \in \mathcal{X}$, and so the condition (i) holds in $\sylv_2$.
Suppose that $\overrightarrow{\occ}_y(x, \bfu)\neq \overrightarrow{\occ}_y(x, \bfv)$ for some  $x,y \in \mathcal{X}$. Then $\sylv_2$ satisfies $\bfa yx^s=\bfu[x,y]\approx \bfv[x,y]=\bfb yx^t$ for some $s, t \geq 1$, $s\ne t$ and $\bfa, \bfb \in \{x,y\}^*$. Without loss of generality, we may assume that $s < t$. Let $\phi$ be a substitution such that $x\mapsto 2, y\mapsto 1$. Using the Algorithm \ref{algo:sylv}, one sees that
\begin{align*}
\mathrm{P}_{\sylv_2}(\phi(\bfu[x,y]))=\parbox[c]{2.5cm}
{\begin{tikzpicture}
[line width = 0pt,
empty/.style = {circle, draw, inner sep=2pt}]
\node [empty,label = right:$1$-th] (A) at (2,2) {2};
\node [empty, label = right:$2$-th] (B) at (1.6,1.2) {2};
\node [empty, label = right:$s$-th] (C) at (1.2,.4) {2};
\node [empty, label = right:$(s+1)$-th] (D) at (.8,-.4) {1};
\node [rectangle,draw, minimum size = 0.4cm] (E) at (.4,-1.2) {};
\node [rectangle,draw, minimum size = 0.4cm] (F) at (1.2,-1.2) {};
\draw (A) -- (B);
\draw[dashed] (B) -- (C);
\draw (C) -- (D);
\draw (D) -- (E);
\draw (D) -- (F);
\end{tikzpicture}}
\text{and}\quad
\mathrm{P}_{\sylv_2}(\phi(\bfv[x,y]))=\parbox[c]{3cm}
{\begin{tikzpicture}
[line width = 0pt,
empty/.style = {circle, draw, inner sep=2pt}]
\node [empty,label = right:$1$-th] (A) at (2,2) {2};
\node [empty, label = right:$2$-th] (B) at (1.6,1.2) {2};
\node [empty, label = right:$s$-th] (C) at (1.2,.4) {2};
\node [empty, label = right:$t$-th] (D) at (.8,-.4) {2};
\node [empty, label = right:$(t+1)$-th] (E) at (.4,-1.2) {1};
\node [rectangle,draw, minimum size = 0.4cm] (F) at (0,-2) {};
\node [rectangle,draw, minimum size = 0.4cm] (G) at (.8,-2) {};
\draw (A) -- (B);
\draw[dashed] (B) -- (C);
\draw[dashed] (C) -- (D);
\draw (D) -- (E);
\draw (E) -- (F);
\draw (E) -- (G);
\end{tikzpicture}}
\end{align*}
Then $\varphi(\bfu[x,y]) \ne \varphi(\bfv[x,y])$.  This implies that $\sylv_2$ does not satisfy $\bfu[x,y]\approx \bfv[x,y]$, a contradiction. Hence $\overrightarrow{\occ}_y(x, \bfu)=\overrightarrow{\occ}_y(x, \bfv)$ for any $x,y \in \mathcal{X}$, and so the condition (ii) holds.

Suppose that $\fp(\bfu)\neq \fp(\bfv)$. Then there exists letters $x,y$ such that $\sylv_2$ satisfies $\bfa yx^s=\bfu[x,y]\approx \bfv[x,y]=\bfb xy^t$ for some $s, t\geq 1$ and $\bfa, \bfb \in \{x,y\}^*$. Let $\varphi$ be a substitution such that $x\mapsto 2, y\mapsto 1$. Then $\varphi(\bfu[x,y])$ ends with $2$ and $\varphi(\bfv[x,y])$ ends with $1$.  Since the rightmost symbol in a word $w$ determines the root node of $\mathrm{P}_{\sylv_2}(w)$, it follows that $\varphi(\bfu[x,y]) \ne \varphi(\bfv[x,y])$.  This implies that $\sylv_2$ does not satisfy $\bfu[x,y]\approx \bfv[x,y]$, a contradiction. Hence $\fp(\bfu)= \fp(\bfv)$, and so the condition (iii) holds.
\end{proof}

\begin{lemma}\label{lem:pr=qr}
Let $p, q, r \in \sylv_{\infty}$  such that $\ev(p) = \ev(q)\leqslant \ev(r)$. Then $pr = qr$.
\end{lemma}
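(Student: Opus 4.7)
Choose word representatives $u, v, w \in \mathcal{A}^*$ of $p, q, r$. Since $pr$ and $qr$ are represented by the concatenations $uw$ and $vw$, the lemma reduces to proving the tree equality ${\rm P}_{\sylv_{\infty}}(uw) = {\rm P}_{\sylv_{\infty}}(vw)$. Setting $T := {\rm P}_{\sylv_{\infty}}(w)$, both sides are built by inserting the letters of $u$ (respectively of $v$) into $T$ one at a time, right-to-left, via Algorithm~\ref{algo:sylv}. Since $\ev(u) = \ev(v)$ and $\ev(u) \leq \ev(w) = \ev(T)$, the plan is to show that, for any right-strict binary search tree $T'$ and any word $u'$ with $\ev(u') \leq \ev(T')$, the tree obtained by inserting the letters of $u'$ into $T'$ is independent of the order of insertion. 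Applied to $T'=T$ and $u'\in\{u,v\}$, this yields the lemma.

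The order-independence reduces to a single-pair commutativity claim: \emph{whenever $T'$ is a right-strict binary search tree that contains both letters $a, b \in \mathcal{A}$, inserting $a$ and then $b$ into $T'$ gives the same tree as inserting $b$ and then $a$.} Granting this, any two orderings of a multiset of insertions differ by a sequence of adjacent transpositions, and every intermediate tree still contains all letters appearing in $u'$ (because the tree only grows after each insertion), so the commutativity claim applies at each swap.

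To prove the commutativity claim, assume $a < b$ (the case $a = b$ is trivial). A structural observation is that in a right-strict binary search tree, the nodes labelled by a given letter $c$ form an ancestor chain, each non-topmost $c$-node lying in the left subtree of the $c$-node immediately above, since the right subtree of any $c$-node contains only values strictly greater than $c$. Consequently, the insertion path for $c$ descends through every $c$-node down to the lowest, call it $v^{(c)}$, then enters its left subtree (which contains only values $< c$) and follows the right spine of that subtree down to an empty leaf slot $P_c$. A short case analysis on how $v^{(a)}$ and $v^{(b)}$ are related---they are unrelated; $v^{(a)}$ is an ancestor of $v^{(b)}$, in which case $v^{(b)}$ lies in the right subtree of $v^{(a)}$ since $b > a$; or $v^{(b)}$ is an ancestor of $v^{(a)}$, in which case $v^{(a)}$ lies in the left subtree of $v^{(b)}$---shows in every case that $P_a$ and $P_b$ are distinct empty slots of $T'$. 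Because each $P_c$ is a leaf position, any insertion path that reaches it must terminate there; so $P_a \neq P_b$ forces the $b$-insertion path in $T'$ not to visit $P_a$. It follows that the $b$-insertion path is identical in $T'$ and in $T'$ augmented by a new $a$-node at $P_a$, and likewise with the roles of $a$ and $b$ swapped. Both orderings therefore produce $T'$ augmented by new nodes at $P_a$ and $P_b$, proving commutativity.

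The main obstacle is the subcase of the commutativity claim in which $v^{(b)}$ is an ancestor of $v^{(a)}$, for then both $P_a$ and $P_b$ lie inside the left subtree of $v^{(b)}$. One must check that the rightward descent computing $P_b$ cannot veer into the left subtree of $v^{(a)}$, which is where $P_a$ lives: this holds because at the node $v^{(a)}$, whose value $a$ is smaller than $b$, the descent continues rightwards rather than turning left.
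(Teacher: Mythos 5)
Your argument is correct, and it is worth noting how it relates to the paper's proof, which is essentially a citation: the paper invokes Lemma 19 of Cain--Malheiro (the case $\ev(p)=\ev(q)=\ev(r)$) and observes that the proof there extends verbatim to $\ev(p)=\ev(q)\leqslant\ev(r)$, the stated reason being that each symbol $d$ occurring in $p$ or $q$ is inserted into a previously empty slot of ${\rm P}_{\sylv_{\infty}}(r)$ depending only on the value of $d$, and that distinct symbols land in distinct slots. Your proof rests on exactly the same structural facts about the insertion path --- the ancestor-chain structure of equal labels, the target slot $P_c$ determined by $c$ alone, and $P_a\neq P_b$ for $a\neq b$ --- but you package the conclusion differently: rather than describing outright where every inserted symbol lands (so that the final tree visibly depends only on $\ev(p)$), you establish pairwise commutativity of insertions of letters already present in the tree and pass to order-independence by adjacent transpositions; the reduction is legitimate because insertion only adds nodes, so every intermediate tree still contains all letters of $\con(p)$, which lie in $\con(r)$ by the hypothesis $\ev(p)\leqslant\ev(r)$. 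What your route buys is a self-contained argument that does not lean on the cited lemma; what the paper's route buys is brevity and a more explicit picture of the product (each symbol $d$ contributes a chain of $|p|_d$ new $d$-nodes hanging at $P_d$, whence the dependence on $\ev(p)$ alone). You also correctly isolate the one genuinely delicate subcase, where $v^{(b)}$ is an ancestor of $v^{(a)}$ and both target slots lie in the left subtree of $v^{(b)}$, and your resolution of it (the rightward descent computing $P_b$ passes through, rather than into the left subtree of, any node labelled $a<b$) is sound.
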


\begin{proof}
In \cite[Lemma 19]{CM16}, it is shown that if $p, q, r \in \sylv_{\infty}$ such that $\ev(p) = \ev(q)= \ev(r)$, then $pr = qr$. In fact, by the proof of \cite[Lemma 19]{CM16}, it is easy to see that the result still holds when $\ev(p) =\ev(q)<\ev(r)$. This is because every symbol $d$ that from $p$ or $q$ is inserted into a particular previously empty subtree of ${\rm P}_{\sylv_{\infty}}(r)$, dependent only on the value of the symbol $d$ (and not on its position in $p$ or $q$), and that unequal symbols are inserted into different subtrees. Since $\ev(p) = \ev(q)$, the same number of symbols $d$ are inserted for each such symbol $d$. Hence if $p, q, r \in \sylv_{\infty}$ such that $\ev(p) = \ev(q)\leqslant \ev(r)$, then $pr = qr$ still holds.
\end{proof}

\begin{theorem}\label{cor:sylv's id}
The sylvester monoid $\sylv_{\infty}$ satisfies the identity $xysxty \approx yxsxty$.
\end{theorem}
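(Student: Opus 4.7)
The plan is to deduce the identity directly from Lemma \ref{lem:pr=qr} by factoring the two words so that the common right factor has evaluation componentwise $\geqslant$ the evaluation of the differing left factors.

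Let $\varphi : \mathcal{X} \to \sylv_{\infty}$ be an arbitrary substitution and set $a = \varphi(x)$, $b = \varphi(y)$, $c = \varphi(s)$, $d = \varphi(t)$. I need to show that
\[
abcadb = bacadb \quad \text{in } \sylv_{\infty}.
\]
The key observation is to split both products at the same place: write the left-hand side as $(ab) \cdot (cadb)$ and the right-hand side as $(ba) \cdot (cadb)$. Let $p = ab$, $q = ba$ and $r = cadb$, all regarded as elements of $\sylv_{\infty}$.

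Next I would verify the two evaluation conditions required by Lemma \ref{lem:pr=qr}. Since $\ev$ is a homomorphism into the commutative monoid of tuples of non-negative integers,
\[
\ev(p) = \ev(a) + \ev(b) = \ev(b) + \ev(a) = \ev(q),
\]
which gives the first hypothesis. For the second hypothesis,
\[
\ev(r) = \ev(c) + \ev(a) + \ev(d) + \ev(b) \geqslant \ev(a) + \ev(b) = \ev(p),
\]
componentwise, because $\ev(c)$ and $\ev(d)$ have non-negative entries. Hence $\ev(p) = \ev(q) \leqslant \ev(r)$.

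Applying Lemma \ref{lem:pr=qr} gives $pr = qr$, that is, $abcadb = bacadb$. Since $\varphi$ was arbitrary, $\sylv_{\infty}$ satisfies $xysxty \approx yxsxty$. The proof is essentially a one-line reduction to Lemma \ref{lem:pr=qr}; there is no real obstacle, because the tail $cadb$ of the word was designed precisely so that its evaluation dominates that of $ab$ (both of the letters $x$ and $y$ reappear in the tail), which is exactly what the lemma needs.
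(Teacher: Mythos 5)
Your proof is correct and is essentially identical to the paper's own argument: both decompose $\varphi(xysxty)$ and $\varphi(yxsxty)$ as $p r$ and $q r$ with $p=\varphi(xy)$, $q=\varphi(yx)$, $r=\varphi(sxty)$, observe $\ev(p)=\ev(q)\leqslant\ev(r)$, and invoke Lemma~\ref{lem:pr=qr}. You simply spell out the evaluation computation that the paper calls ``obvious.''
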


\begin{proof}
Let $\varphi :\mathcal{X}\rightarrow \sylv_{\infty}$ be any substitution. Then it is obvious that  $\ev(\varphi(xy))=\ev(\varphi(yx))\leqslant \ev(\varphi(sxty))$. Hence it follows from Lemma \ref{lem:pr=qr} that $\varphi(xysxty)=\varphi(yxsxty)$. Therefore the sylvester monoid $\sylv_{\infty}$ satisfies the identity $xysxty \approx yxsxty$.
\end{proof}

\begin{theorem}
The identity $xysxty \approx yxsxty$ is a finite identity basis for the monoids $\sylv_n$ whenever $2\leq n\leq \infty$. Therefore all sylvster monoids of rank greater than or equal to $2$ are equationally equivalent.
\end{theorem}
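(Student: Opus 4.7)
The plan is to mirror exactly the proof pattern used for the stalactic/taiga theorem a few paragraphs earlier. The key engine is Theorem \ref{thm:sc1}, so the proof reduces to verifying its two hypotheses for every $\sylv_n$ with $2\leq n\leq\infty$: (a) that $\sylv_n$ satisfies the identity $xysxty\approx yxsxty$, and (b) that every identity $\bfu\approx\bfv$ satisfied by $\sylv_n$ meets conditions (i)--(iii) of Theorem \ref{thm:sc1}. Both halves have essentially been established already, and the rest is just a containment argument along the tower
\[
\sylv_1 \subset \sylv_2 \subset \cdots \subset \sylv_n \subset \cdots \subset \sylv_{\infty}.
\]

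First I would dispatch hypothesis (a). Theorem \ref{cor:sylv's id} states that $\sylv_\infty$ satisfies $xysxty\approx yxsxty$; since $\sylv_n$ embeds into $\sylv_\infty$ for every $n$, each $\sylv_n$ with $2\leq n\leq\infty$ satisfies this identity as well. Next I would handle hypothesis (b) by running the containment in the opposite direction. Fix $n$ with $2\leq n\leq\infty$ and let $\bfu\approx\bfv$ be any identity satisfied by $\sylv_n$. Because $\sylv_2\subseteq\sylv_n$, the identity $\bfu\approx\bfv$ is also satisfied by $\sylv_2$, and Lemma \ref{lem:property of sylv'id} then delivers conditions (i)--(iii) of Theorem \ref{thm:sc1} for $\bfu\approx\bfv$. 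With both hypotheses in hand, Theorem \ref{thm:sc1} applies and yields that $\{xysxty\approx yxsxty\}$ is an identity basis for $\sylv_n$.

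From this the second assertion is immediate: every $\sylv_n$ with $2\leq n\leq\infty$ is axiomatized by the single identity $xysxty\approx yxsxty$, hence they all have the same identity theory and are pairwise equationally equivalent.

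There is no real obstacle here, since the combinatorial content has been isolated into Lemma \ref{lem:property of sylv'id} (the tree-based verification of (i)--(iii) using the rank-$2$ insertion) and Theorem \ref{cor:sylv's id} (the application of Lemma \ref{lem:pr=qr} to the evaluation $\ev(\varphi(xy))=\ev(\varphi(yx))\leqslant \ev(\varphi(sxty))$). The only point requiring care is the direction of the containment: identities pass \emph{down} from $\sylv_n$ to $\sylv_2$, which is what lets us apply the rank-$2$ conclusions uniformly in $n$. That observation is what makes the single identity $xysxty\approx yxsxty$ a basis not just for $\sylv_2$ but simultaneously for every rank, and hence establishes the equational equivalence of the whole family.
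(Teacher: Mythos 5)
Your proposal is correct and follows essentially the same route as the paper: both reduce the claim to Theorem \ref{thm:sc1} by combining Theorem \ref{cor:sylv's id} (the identity holds in $\sylv_\infty$, hence in every $\sylv_n$) with Lemma \ref{lem:property of sylv'id} (conditions (i)--(iii) hold for $\sylv_2$, hence for any identity of $\sylv_n$ via the containment $\sylv_2\subset\sylv_n$). You merely make explicit the two directions of the containment argument that the paper leaves implicit.
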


\begin{proof}
Clearly, we only need to show that each of the monoids $\sylv_n$ for any $2\leq n\leq \infty$ can be defined by the identity $xysxty \approx yxsxty$.  Note that
\[
\sylv_1 \subset \sylv_2 \subset \cdots \subset \sylv_n \subset \cdots \subset \sylv_{\infty}.
\]
By Theorem \ref{thm:sc1}, it suffices to show that $\sylv_{\infty}$ satisfies the identity  $xysxty \approx yxsxty$ and $\sylv_2$ satisfies the conditions (i)--(iii) in Theorem \ref{thm:sc1}. Therefore, the results hold directly follows from Theorem~\ref{cor:sylv's id} and Lemma \ref{lem:property of sylv'id}.
\end{proof}

Symmetrically, we have
\begin{theorem}
The identity $ytxsyx \approx ytxsxy$ is a finite identity basis for the monoids $\sylv_n^\sharp$ whenever $2\leq n\leq \infty$. Therefore all $\sharp$-sylvster monoids of rank greater than or equal to $2$ are equationally equivalent.
\end{theorem}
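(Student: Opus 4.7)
The plan is to mirror the sylvester argument via the left--right duality between $\sylv^\sharp_\infty$ (left-strict binary search trees, left-to-right insertion) and $\sylv_\infty$ (right-strict binary search trees, right-to-left insertion). Under this duality, the identity $xysxty\approx yxsxty$ corresponds to its word reversal $ytxsyx\approx ytxsxy$, and the roles played by $\fp$, $\overrightarrow{\occ}_y$ and ``rightmost symbol'' in the sylvester proof are taken respectively by $\ip$, $\overleftarrow{\occ}_y$ and ``leftmost symbol.''

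First I would establish the dual of Lemma~\ref{lem:pr=qr}: if $p,q,r\in\sylv^\sharp_\infty$ satisfy $\ev(p)=\ev(q)\leqslant\ev(r)$, then $rp=rq$. The argument of \cite[Lemma~19]{CM16} transfers verbatim with ``right-to-left'' replaced by ``left-to-right'': the tree $\mathrm{P}_{\sylv^\sharp_\infty}(rp)$ is obtained by first building $\mathrm{P}_{\sylv^\sharp_\infty}(r)$ and then inserting the letters of $p$ one at a time from the left; each such letter $d$ lands in a specific nonempty subtree determined solely by $d$ (since $\ev(p)\leqslant\ev(r)$ ensures every letter of $p$ already occurs in $r$), and $\ev(p)=\ev(q)$ forces the same multiset of letters to be inserted. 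Taking $r=ytxs$, $p=yx$, $q=xy$ immediately gives $ytxsyx=ytxsxy$ in $\sylv^\sharp_\infty$, so every $\sylv^\sharp_n$ with $n\geq 2$ satisfies $ytxsyx\approx ytxsxy$.

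Second, I would mirror Lemma~\ref{lem:property of sylv'id} to show that every identity $\bfu\approx\bfv$ of $\sylv^\sharp_2$ satisfies (i)~$\occ(x,\bfu)=\occ(x,\bfv)$ for all $x\in\mathcal{X}$, (ii$^\sharp$)~$\overleftarrow{\occ}_y(x,\bfu)=\overleftarrow{\occ}_y(x,\bfv)$ for all $x,y\in\mathcal{X}$, and (iii$^\sharp$)~$\ip(\bfu)=\ip(\bfv)$. Condition~(i) follows from $\sylv^\sharp_1$ being free monogenic and Lemma~\ref{L21}(iii). For (iii$^\sharp$), if $\ip(\bfu)\neq\ip(\bfv)$ then some two-letter restriction $\bfu[x,y]$ and $\bfv[x,y]$ begin with different letters; since the leftmost symbol of a word is the root of its $\sylv^\sharp_2$-tree, the substitution $x\mapsto 2,y\mapsto 1$ exhibits two trees with different roots. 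For (ii$^\sharp$), a violation produces prefixes $x^s y\bfa$ versus $x^t y\bfb$ in the restriction with $s\neq t$, and the same substitution produces trees that differ in the length of the right spine of $2$'s preceding the first $1$.

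Finally I would invoke the left--right mirror of Theorem~\ref{thm:sc1}: if a semigroup satisfies $ytxsyx\approx ytxsxy$ together with (i), (ii$^\sharp$) and (iii$^\sharp$), then this identity is an identity basis. Its proof is the verbatim mirror of Theorem~\ref{thm:sc1}: the identity reduces any word $\bfu$ to a canonical form $a_1^{e_1}\bfw_1 a_2^{e_2}\bfw_2\cdots a_m^{e_m}\bfw_m$ with $\ip(\bfu)=a_1\cdots a_m$ and each $\bfw_i$ containing only letters from $\{a_1,\dots,a_{i-1}\}$; then (iii$^\sharp$) pins down the $a_i$, (i) together with (ii$^\sharp$) pins down the exponents $e_i$, and (ii$^\sharp$) pins down each $\bfw_i$. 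Combining these steps yields that $ytxsyx\approx ytxsxy$ is an identity basis for each $\sylv^\sharp_n$ with $2\leqslant n\leqslant\infty$, and hence all such monoids are equationally equivalent. The main obstacle is purely bookkeeping---verifying that the mirror identity is strong enough to push every non-first occurrence leftward into its designated exponent block past intervening first-occurrence letters---but this is the step-by-step dual of the corresponding argument in Theorem~\ref{thm:sc1}.
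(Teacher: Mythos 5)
Your overall strategy is exactly the paper's: the paper gives no separate proof of this theorem, merely prefacing it with ``Symmetrically, we have,'' and your proposal spells out precisely the left--right mirror that this phrase invokes (the dual of Lemma~\ref{lem:pr=qr}, the dual of Lemma~\ref{lem:property of sylv'id}, and the mirror of Theorem~\ref{thm:sc1}). That framework is sound, including the observation that $ytxsyx\approx ytxsxy$ is the word-reversal of $xysxty\approx yxsxty$ and that the relevant invariants become $\ip$ and $\overleftarrow{\occ}$.

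There is, however, one step that fails as written: your verification of (ii$^\sharp$). With the substitution $x\mapsto 2$, $y\mapsto 1$, the word $2^s1\bfa$ read left to right into a left strict binary search tree yields a tree depending only on the numbers of $1$'s and $2$'s, not on $s$: each $2$ satisfies $2\geq 2$ at the root and is appended to the right spine, while the first $1$ satisfies $1<2$ at the root and becomes the left child of the root (not the bottom of the spine), with all later $1$'s descending to its right. Hence $\mathrm{P}_{\sylv^\sharp_2}(2^s1\bfa)=\mathrm{P}_{\sylv^\sharp_2}(2^t1\bfb)$ whenever the contents agree, and this substitution cannot detect $\overleftarrow{\occ}_y(x,\cdot)$. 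The correct mirror of Lemma~\ref{lem:property of sylv'id} uses $x\mapsto 1$, $y\mapsto 2$: then $1^s2\cdots$ builds a right spine of exactly $s$ nodes labelled $1$ above the topmost $2$, every later $1$ falls into the left subtree of that $2$, and $s\neq t$ yields distinct trees. This sign flip is forced because the duality between $\sylv$ and $\sylv^\sharp$ is word-reversal composed with order-reversal of the alphabet, so the substitution must be reversed along with the word. With that substitution corrected, the rest of your argument goes through.
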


\subsection{Finite basis problem for Baxter monoid}

\begin{lemma}\label{lem:spr=sqr}
Let $p, q, r, s \in \baxt_{\infty}$ such that $\ev(p) = \ev(q) \leqslant \ev(r), \ev(s)$. Then
$spr = sqr$.
\end{lemma}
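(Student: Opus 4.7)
The plan is to exploit the product description of the Baxter monoid: since ${\rm P}_{\baxt_\infty}(w)=({\rm P}_{\sylv^\sharp_\infty}(w),{\rm P}_{\sylv_\infty}(w))$, two elements of $\baxt_\infty$ coincide if and only if they have the same image in both $\sylv_\infty$ and $\sylv^\sharp_\infty$. So it suffices to verify the equality $spr=sqr$ separately in each of these two factors.

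For the sylvester factor I would simply invoke Lemma \ref{lem:pr=qr}: the hypothesis $\ev(p)=\ev(q)\leqslant\ev(r)$ gives $pr=qr$ in $\sylv_\infty$, and left-multiplying by $s$ yields $spr=sqr$ in $\sylv_\infty$.

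For the $\sharp$-sylvester factor I would first establish the left-right dual of Lemma \ref{lem:pr=qr}, namely: if $\ev(p)=\ev(q)\leqslant\ev(s)$, then $sp=sq$ in $\sylv^\sharp_\infty$. This dual statement is proved by exactly the same argument as in \cite[Lemma 19]{CM16}, reinterpreted through the mirror symmetry between Algorithms \ref{algo:sylv} and \ref{algo:sylv-2}: reading $sp$ left-to-right first builds ${\rm P}_{\sylv^\sharp_\infty}(s)$, and then each symbol $d$ coming from $p$ descends into a specific subtree of ${\rm P}_{\sylv^\sharp_\infty}(s)$ that depends only on the value of $d$, not on the position of $d$ in $p$. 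The condition $\ev(p)\leqslant\ev(s)$ guarantees that the pertinent subtrees are already non-empty and that distinct letters never interfere; combined with $\ev(p)=\ev(q)$, this forces the inserted multisets to match, so ${\rm P}_{\sylv^\sharp_\infty}(sp)={\rm P}_{\sylv^\sharp_\infty}(sq)$. Right-multiplying $sp=sq$ by $r$ then gives $spr=sqr$ in $\sylv^\sharp_\infty$.

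Putting the two component equalities together yields $spr=sqr$ in $\baxt_\infty$. The only real subtlety, and hence the main obstacle, is recording the left-right dual of Lemma \ref{lem:pr=qr} correctly; once the mirror of Algorithm \ref{algo:sylv-2} versus Algorithm \ref{algo:sylv} is acknowledged, no new idea beyond those already used for $\sylv_\infty$ is required.
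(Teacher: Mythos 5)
Your proposal is correct and follows essentially the same route as the paper: split $\baxt_\infty$ into its $\sylv_\infty$ and $\sylv^\sharp_\infty$ components, apply Lemma \ref{lem:pr=qr} to get $pr=qr$ in the former and its left--right dual to get $sp=sq$ in the latter, then multiply by $s$ on the left and $r$ on the right respectively. The paper likewise invokes ``the dual of Lemma \ref{lem:pr=qr}'' without further elaboration, so your extra paragraph justifying the dual via the mirror symmetry of Algorithms \ref{algo:sylv} and \ref{algo:sylv-2} is a harmless (indeed welcome) amplification rather than a divergence.
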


\begin{proof}
Since $\ev(p) = \ev(q) \leqslant \ev(r)$, it follows from Lemma \ref{lem:pr=qr} that ${\rm P}_{\sylv_{\infty}}(pr)={\rm P}_{\sylv_{\infty}}(qr)$. Thus ${\rm P}_{\sylv_{\infty}}(spr)={\rm P}_{\sylv_{\infty}}(sqr)$.  Since $\ev(p) = \ev(q) \leqslant \ev(s)$, it follows from the dual of Lemma \ref{lem:pr=qr} that ${\rm P}_{\sylv^{\sharp}_{\infty}}(sp)={\rm P}_{\sylv^{\sharp}_{\infty}}(sq)$. Thus ${\rm P}_{\sylv^{\sharp}_{\infty}}(spr)={\rm P}_{\sylv^{\sharp}_{\infty}}(sqr)$.  Therefore ${\rm P}_{\baxt_{\infty}}(spr)={\rm P}_{\baxt_{\infty}}(sqr)$, so that $spr = sqr$.
\end{proof}

\begin{theorem}\label{cor:baxt's id}
The Baxter monoid $\baxt_{\infty}$ satisfies the identities \eqref{id:baxt}.
\end{theorem}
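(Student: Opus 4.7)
The plan is to derive both identities in \eqref{id:baxt} as immediate consequences of Lemma \ref{lem:spr=sqr}. That lemma asserts that in $\baxt_{\infty}$, whenever elements $p,q,r,s$ satisfy $\ev(p)=\ev(q)\leqslant \ev(r),\ev(s)$, one has $spr=sqr$. Both identities \eqref{id:a} and \eqref{id:b} have the shape ``outer context $\cdot\, xy\,\cdot$ outer context $\,\approx\,$ outer context $\cdot\, yx\,\cdot$ outer context'', and in each case the two outer contexts contain both of the letters $x$ and $y$, so the hypotheses of Lemma \ref{lem:spr=sqr} will apply directly after any substitution.

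The first step is to fix an arbitrary substitution $\varphi:\mathcal{X}\to\baxt_{\infty}$. For identity \eqref{id:a} I would set $\tilde s=\varphi(ysxt)$, $\tilde p=\varphi(xy)$, $\tilde q=\varphi(yx)$, and $\tilde r=\varphi(hxky)$, then verify the three evaluation conditions: $\ev(\tilde p)=\ev(\tilde q)$ since the words $xy$ and $yx$ have identical letter counts and $\ev$ is additive on products; $\ev(\tilde p)\leqslant \ev(\tilde s)$ because $ysxt$ contains at least one $x$ and one $y$; and $\ev(\tilde p)\leqslant \ev(\tilde r)$ because $hxky$ also contains at least one $x$ and one $y$. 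Lemma \ref{lem:spr=sqr} then yields $\tilde s\tilde p\tilde r=\tilde s\tilde q\tilde r$, which is precisely $\varphi$ applied to \eqref{id:a}. The argument for \eqref{id:b} is identical, with $\tilde s=\varphi(xsyt)$ in place of $\varphi(ysxt)$; again the prefix word $xsyt$ and the suffix word $hxky$ each contain both $x$ and $y$, so the evaluation-dominance conditions hold.

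There is essentially no substantive obstacle, because Lemma \ref{lem:spr=sqr} already packages all of the combinatorial content about right-strict and left-strict binary search tree insertions. The only points requiring care are (i) checking in each of the two identities that the words flanking the swapped factor $xy/yx$ genuinely contain both letters $x$ and $y$, which is exactly how the identities are engineered (the prefix provides an occurrence of each letter through either $ysxt$ or $xsyt$, and the suffix does so through $hxky$), and (ii) observing that $\ev$ is additive on the monoid $\baxt_{\infty}$, so that containment of letters in the underlying words translates directly into the required evaluation inequalities for the images under $\varphi$. Once these two observations are recorded, the theorem follows as a one-line application of Lemma \ref{lem:spr=sqr} to each of \eqref{id:a} and \eqref{id:b}.
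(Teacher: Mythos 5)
Your proposal is correct and follows essentially the same route as the paper: fix an arbitrary substitution, observe that $\ev(\varphi(xy))=\ev(\varphi(yx))\leqslant \ev(\varphi(ysxt)),\ev(\varphi(hxky))$ (and similarly with $xsyt$ for the second identity), and apply Lemma~\ref{lem:spr=sqr}. The only difference is that you spell out explicitly the evaluation-dominance checks that the paper labels as obvious.
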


\begin{proof}
Let $\varphi :\mathcal{X}\rightarrow \baxt_{\infty}$ be any substitution.
It is obvious that  $\ev(\varphi(xy))=\ev(\varphi(yx))\leqslant \ev(\varphi(ysxt)), \ev(\varphi(hxky))$. By Lemma \ref{lem:spr=sqr}, we have  $\varphi(ysxtxyhxky) =  \varphi(ysxtyxhxky)$. Therefore the Baxter monoid $\baxt_{\infty}$ satisfies the identity \eqref{id:a}. A similar argument can show that the Baxter monoid $\baxt_{\infty}$ satisfies the identity \eqref{id:b}.
\end{proof}

\begin{theorem}\label{thm:baxt'id}
The identities \eqref{id:baxt} constitute a finite identity basis for the monoids $\baxt_n$ whenever $2\leq n\leq \infty$. Therefore all Baxter monoids of rank greater than or equal to $2$ are equationally equivalent.
\end{theorem}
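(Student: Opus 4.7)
The plan is to apply Theorem \ref{thm:sc2} with $S = \baxt_n$ for each $2 \leq n \leq \infty$. From the chain $\baxt_2 \subseteq \baxt_n \subseteq \baxt_\infty$, every identity of $\baxt_\infty$ is also an identity of $\baxt_n$, and every identity of $\baxt_n$ is an identity of $\baxt_2$. Hence Theorem \ref{cor:baxt's id} immediately guarantees that $\baxt_n$ satisfies \eqref{id:baxt}, and it remains only to verify that every identity $\bfu \approx \bfv$ satisfied by $\baxt_2$ meets conditions (i)--(iii) of Theorem \ref{thm:sc2}.

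The crucial observation is that the definition ${\rm P}_{\baxt_\infty}(w) = ({\rm P}_{\sylv^{\sharp}_\infty}(w), {\rm P}_{\sylv_\infty}(w))$ makes the congruence $\equiv_{\baxt_\infty}$ finer than each of $\equiv_{\sylv_\infty}$ and $\equiv_{\sylv^{\sharp}_\infty}$. Restricting to $\mathcal{A}_2^*$, this yields natural surjective monoid homomorphisms $\baxt_2 \twoheadrightarrow \sylv_2$ and $\baxt_2 \twoheadrightarrow \sylv^{\sharp}_2$ defined by $[w]_{\equiv_{\baxt_\infty}} \mapsto [w]_{\equiv_{\sylv_\infty}}$ and $[w]_{\equiv_{\baxt_\infty}} \mapsto [w]_{\equiv_{\sylv^{\sharp}_\infty}}$, respectively. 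Consequently, any identity satisfied by $\baxt_2$ is automatically satisfied by both $\sylv_2$ and $\sylv^{\sharp}_2$.

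The verification of (i)--(iii) then proceeds in two symmetric halves. Applying Lemma \ref{lem:property of sylv'id} to $\bfu \approx \bfv$, viewed as an identity of $\sylv_2$, supplies condition (i) together with the $\overrightarrow{\occ}_y(x, \cdot)$ portion of (ii) and the $\fp$ portion of (iii). The dual lemma for $\sylv^{\sharp}_2$ (proved by the mirror argument, with right-to-left insertion into the right strict tree of Algorithm \ref{algo:sylv} replaced by left-to-right insertion into the left strict tree of Algorithm \ref{algo:sylv-2}) supplies the $\overleftarrow{\occ}_y(x, \cdot)$ portion of (ii) and the $\ip$ portion of (iii). Combining the two outputs gives all of (i)--(iii), so Theorem \ref{thm:sc2} applies: the identities \eqref{id:baxt} form an identity basis for $\baxt_n$ for every $2 \leq n \leq \infty$, and all such $\baxt_n$ are equationally equivalent.

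The only real step of substance is the reduction to $\sylv_2$ and $\sylv^{\sharp}_2$ via the homomorphisms described above; once this is in place the remainder is bookkeeping, drawing together Theorem \ref{cor:baxt's id}, Lemma \ref{lem:property of sylv'id}, and its dual. The main potential obstacle is making sure the dual of Lemma \ref{lem:property of sylv'id} really does transpose word for word, but this amounts to nothing beyond swapping the roles of first/last occurrences and of the two insertion algorithms, so no new idea is required.
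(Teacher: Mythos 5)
Your proposal is correct and follows essentially the same route as the paper: reduce to Theorem \ref{thm:sc2} via the chain $\baxt_2\subset\baxt_n\subset\baxt_\infty$, use Theorem \ref{cor:baxt's id} for the identities, and obtain conditions (i)--(iii) from Lemma \ref{lem:property of sylv'id} and its dual through the projections $\baxt_2\twoheadrightarrow\sylv_2$ and $\baxt_2\twoheadrightarrow\sylv_2^{\sharp}$. You merely spell out in more detail the homomorphism step that the paper states in one sentence.
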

\begin{proof}
Clearly, we only need to show that each of the monoids $\baxt_n$ for any $2\leq n\leq \infty$ can be defined by the identities \eqref{id:baxt}.
Note that
\[
\baxt_1 \subset \baxt_2 \subset \cdots \subset \baxt_n \subset \cdots \subset \baxt_{\infty}.
\]
By Theorem \ref{thm:sc2}, it suffices to show that $\baxt_{\infty}$ satisfies the identities \eqref{id:baxt} and $\baxt_2$ satisfies the conditions (i)--(iii) in Theorem \ref{thm:sc2}.

Clearly, the Baxter monoid $\baxt_{\infty}$ satisfies the identities \eqref{id:baxt} by Theorem \ref{cor:baxt's id}.
Since  both $\sylv_2$ and $\sylv_2^{\sharp}$ are homomorphic images of $\baxt_2$ by the definition of Baxter monoid, it follows from Lemma \ref{lem:property of sylv'id} and its dual that $\baxt_2$ satisfies the conditions (i)--(iii) in Theorem \ref{thm:sc2}. Consequently, each of monoids $\baxt_n$ for any $2\leq n\leq \infty$ can be defined by the identities \eqref{id:baxt}, and so all of them are equationally equivalent.
\end{proof}

\end{document}